\documentclass[11pt]{amsart}

\usepackage[a4paper,margin=1.1in]{geometry}
\usepackage{amsmath,amssymb,amsthm,mathtools}
\usepackage{enumitem}
\usepackage[colorlinks=true,linkcolor=blue,citecolor=blue,urlcolor=blue]{hyperref}
\usepackage{microtype}
\usepackage{a4wide,color,eucal,enumerate,mathrsfs}
\usepackage[normalem]{ulem}
\usepackage{amsmath,amssymb,epsfig,amsthm,commath} 
\usepackage[latin1]{inputenc}
\usepackage{psfrag}
\usepackage{hyperref,cleveref,tikz}

\title{Uniqueness of Blow-ups for the Superconductivity Free Boundary Problem}
\author{Shibing Chen, Yuanyuan Li and Xianduo Wang}
\date{April 25, 2026}
\address{School of Mathematical Sciences,
University of Science and Technology of China,
Hefei, Anhui 230026, China}
\email{chenshib@ustc.edu.cn}

\address{ Institute for Theoretical Sciences, Westlake University, Hangzhou, 310030, China}
\email{lyyuan@westlake.edu.cn}

\address{School of Mathematical Sciences,
University of Science and Technology of China,
Hefei, Anhui 230026, China}
\email{xdwang@ustc.edu.cn}

\newtheorem{theorem}{Theorem}[section]
\newtheorem{proposition}[theorem]{Proposition}
\newtheorem{lemma}[theorem]{Lemma}
\newtheorem{corollary}[theorem]{Corollary}
\newtheorem{definition}[theorem]{Definition}
\newtheorem{remark}[theorem]{Remark}

\newcommand{\Rn}{\mathbb R^n}

\newcommand{\tr}{\operatorname{tr}}
\newcommand{\tf}{\operatorname{tf}}

\newcommand{\eps}{\varepsilon}

\begin{document}

\begin{abstract}
We study the free-boundary equation
\[
        \Delta u=\chi_{\{|\nabla u|>0\}}
\]
near the origin.  We prove that, at a singular point of
\(\partial\{|\nabla u|>0\}\), the quadratic blow-up is unique.  As noted in
\cite[Notes to Chapter 7]{PSU2012}, little is known about the singular set
for this problem.  The usual Weiss--Monneau monotonicity argument does not
seem to apply directly, because the inactive set is determined by the
vanishing of \(\nabla u\), rather than by a sign condition on \(u\).
The proof follows the quadratic part of the rescalings.  Projecting onto
the trace-free quadratic harmonics yields a finite-dimensional differential
equation for the quadratic coefficient.  Together with a Lyapunov identity and estimates on dyadic annuli, this
implies convergence of the quadratic coefficient, and hence uniqueness of
the blow-up.
\end{abstract}

\maketitle

\section{Introduction}

Free boundary problems of obstacle type occupy a central place in the
regularity theory of elliptic partial differential equations.  A basic
feature of these problems is that the equation is coupled to an a priori
unknown set, and the main questions concern both the regularity of the
solution and the geometry of the interface.  In the classical obstacle
problem, the free boundary is determined by the positivity set of the
solution, and a now well-developed theory gives optimal regularity,
classification of blow-ups, regularity of the regular set, and a refined
description of the singular set; see, for instance,
\cite{Caffarelli1977,Caffarelli1998,Weiss1999,Monneau2003,PSU2012, FigalliSerra2019, FigalliRosOtonSerra2020, SavinYu2019}.
A crucial role in this theory is played by monotonicity formulas of
Weiss type and Monneau type.  These formulas convert the elliptic problem
into a quantitative statement of asymptotic homogeneity, and they are
particularly effective because the error terms have a favorable sign or
can be controlled by variational comparison.

In this paper we study the free boundary equation
\begin{equation}\label{eq:main}
    \Delta u = \chi_{\{|\nabla u|>0\}}
\end{equation}
in a neighborhood of the origin in $\mathbb R^n$.  We write
\[
    \Omega(u):=\{|\nabla u|>0\}, \qquad
    \Lambda(u):=\{|\nabla u|=0\}, \qquad
    \Gamma(u):=\partial\Omega(u).
\]
The set $\Omega(u)$ is the active region, while $\Gamma(u)$ is the free
boundary.  Equation \eqref{eq:main} is a superconductivity-type
free boundary problem.  It is closely related to the overdetermined
problem
\[
    \Delta u = \chi_{\Omega}, \qquad
    u=|\nabla u|=0 \quad \hbox{in } B_1\setminus\Omega,
\]
which was studied in connection with potential theory and the Pompeiu
problem in \cite{CKS2000}.  In the superconductivity setting, one
important new feature is that the inactive region may consist of several
patches on which the solution takes different constant values.  Near
regular points, Caffarelli, Salazar, and Shahgholian reduced this
multi-patch behavior to a one-patch situation and obtained $C^1$
regularity of the free boundary by combining a refined compactness
analysis with a Weiss-type monotonicity formula \cite{CSS2004}.  However,
as emphasized in the notes to Chapter~7 of \cite{PSU2012}, very little
has been known about the singular set for this problem.

Our goal is to address the first fundamental question at singular free
boundary points: uniqueness of the quadratic blow-up.  Let
\[
    u_r(x):=\frac{u(rx)-u(0)}{r^2}.
\]
At a singular point, subsequential limits of \(u_r\) are quadratic global
solutions.  More precisely, every quadratic blow-up is of the form
\[
    \frac12 x\cdot A x,
    \qquad A=A^T,\qquad \operatorname{tr} A=1.
\]
Indeed, such a polynomial satisfies \(\Delta(\frac12 x\cdot Ax)=1\), while
its gradient vanishes only on a linear subspace.  The central problem is
whether the matrix \(A\) is determined uniquely by \(u\) and the base point,
or whether different sequences \(r_j\downarrow0\) can lead to different
matrices.

The main result of this paper is the following.

\begin{theorem}\label{thm:main}
Let \(u\) solve \eqref{eq:main} in a neighborhood of the origin, and assume
that \(0\in\Gamma(u)\) is a singular point.  Then there exists a unique
symmetric matrix \(A_0\) with
\[
    \operatorname{tr} A_0=1
\]
such that
\[
    \frac{u(rx)-u(0)}{r^2}
    \longrightarrow
    \frac12 x\cdot A_0x
\]
as \(r\downarrow0\),$\hbox{in } C^{1,\alpha}_{\rm loc}(\Rn).$ In
particular, the quadratic blow-up of \(u\) at the origin is unique.
\end{theorem}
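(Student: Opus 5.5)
The plan is to follow the quadratic part of the rescalings along a logarithmic time variable, which is the route announced in the abstract. Set $t=-\log r$ and $w(x,t):=u_r(x)=(u(rx)-u(0))/r^2$. We use the standard facts: $u\in C^{1,1}$ near $0$, so the $u_r$ are uniformly bounded in $C^{1,1}(B_R)$; and at a singular point every subsequential limit is $\frac12 x\cdot Ax$ with $A\ge0$, $\operatorname{tr}A=1$. The reduction of a multi-patch inactive set to this situation, as in \cite{CSS2004}, is assumed. Let $\mathcal H$ be the space of trace-free symmetric matrices, i.e.\ of homogeneous harmonic quadratics. Define the quadratic coefficient $Q(t)$ by projecting $w(\cdot,t)$ onto quadratics on the unit sphere:
\[
Q(t)=c_n\int_{\partial B_1} w(x,t)\,\bigl(x\otimes x-\tfrac1n I\bigr)\,d\sigma ,
\]
with the normalization chosen so that $w\approx\frac12 x\cdot(\frac1n I+Q)x$. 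Uniqueness of the blow-up is then equivalent to the existence of $\lim_{t\to\infty}Q(t)$. Since the subsequential limits form a connected compact set, and $C^{1,\alpha}_{\rm loc}$ convergence follows from the $C^{1,1}$ bounds together with Arzel\`a--Ascoli, it suffices to show $\int^\infty|\dot Q|\,dt<\infty$.

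\textbf{Step 1: ODE for $Q$.} Differentiate in $t$. We have $\partial_t w = 2w - x\cdot\nabla w$, and an integration by parts on $\partial B_1$ uses $\Delta w=\chi_{\{|\nabla w|>0\}}=1-\chi_{\Lambda(w)}$. The contribution of the constant $1$ drops out against the harmonic test functions $x_ix_j-\delta_{ij}|x|^2/n$. What remains is
\[
\dot Q(t)=-c_n'\int_{B_1}\chi_{\Lambda(w(\cdot,t))}\,(x\otimes x-\tfrac1n|x|^2 I)\,\Phi(|x|)\,dx ,
\]
for an explicit positive radial weight $\Phi$, obtained from Green's identity on balls $B_\rho$ integrated in $\rho$. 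So $|\dot Q(t)|\lesssim \int_{B_1}\chi_{\Lambda(w)}|x|^2$, which is the rescaled density of the inactive set near the origin.

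\textbf{Step 2: Lyapunov identity.} We want a quantity that is monotone and whose dissipation controls $\int_{B_1}\chi_{\Lambda}|x|^2$. Take $E(t)=$ the Weiss energy $W(w(\cdot,t))=\int_{B_1}(|\nabla w|^2+2w)-2\int_{\partial B_1}w^2$. Then $\frac{d}{dt}E = 2\int_{\partial B_1}(\partial_t w)^2 - R(t)$, where the defect $R$ comes from the right-hand side being $1-\chi_\Lambda$ instead of $1$. On $\Lambda$ we have $\nabla w=0$ and $w$ is locally constant, so the defect reduces to boundary terms $\int \chi_\Lambda w$ weighted appropriately. These have no sign, and this is exactly the failure of the classical Weiss--Monneau argument. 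The plan is to show that $w$ is $O(\sup_{\Lambda\cap B_1}|w|)$ on the inactive set and that this is small relative to the dissipation, so $|R|$ is integrable in $t$.

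\textbf{Step 3: dyadic annuli and the main obstacle.} On each annulus $t\in[k,k+1]$ we compare $w$ with the quadratic $\frac12x\cdot(\frac1nI+Q(k))x$. This quadratic has gradient vanishing only on the kernel of $A=\frac1nI+Q$, which has dimension at most $n-1$. By nondegeneracy ($\sup_{B_\rho(x_0)}w-w(x_0)\ge c\rho^2$ away from $\Lambda$) and $C^{1,1}$ closeness, $\Lambda(w)\cap(B_1\setminus B_{1/2})$ lies in a thin neighborhood of $\ker A$, of width $\eta_k\to0$. Hence $\int\chi_\Lambda|x|^2\lesssim \eta_k^{\,n-\dim\ker A}$, and this must be made summable. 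The expected obstacle is the lack of a rate: to close, I would prove an improvement of flatness of the form $\eta_{k+1}\le\theta\eta_k+C\int_k^{k+1}|\dot Q|$, derived from the ODE plus the Lyapunov identity (a Łojasiewicz-type inequality for $E$ on the finite-dimensional manifold of quadratic solutions, where the energy is constant). Combining this with $|\dot Q|\lesssim\eta_k^{\,\gamma}$ for some $\gamma>0$ gives geometric decay, hence $\int^\infty|\dot Q|<\infty$. Then $Q(t)\to Q_\infty$, and $A_0=\frac1nI+Q_\infty$ is the unique blow-up.
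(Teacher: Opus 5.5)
\textbf{There is a genuine gap: the proposal never proves that $\int^\infty|\dot Q|\,dt<\infty$.} Your setup and Step 1 match the paper's starting point. Your $Q(t)$ is the paper's $B(t)$, and the bound $|\dot Q|\le C_nF(t)$ with $F(t):=\int_{\Lambda_t\cap B_1}|x|^2\,dx$ is exactly what is needed. The exact identity, from a single Green identity on $B_{e^{-t}}$, is $\dot Q=+\kappa_n\operatorname{tf}\int_{\Lambda_t\cap B_1}x\otimes x\,dx$ with $\kappa_n=n(n+2)/|\partial B_1|$. There is no radial weight, and your sign is reversed, but this does not affect the bound.

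The failure is in proving $\int^\infty F\,dt<\infty$:
\begin{itemize}
\item In Step 2 the Weiss energy is not monotone for this equation. Its defect is a multiple of $\int_{\Lambda_t\cap B_1}w\,dx$. This has no sign, because different components of $\Lambda$ carry different values of $u$. The only information you have about $w$ on $\Lambda_t$ is the qualitative $o(1)$ from compactness. Saying that $w=O(\sup_{\Lambda\cap B_1}|w|)$ on $\Lambda$ is a tautology and gives no integrability in $t$.
\item Step 3 postulates an improvement of flatness $\eta_{k+1}\le\theta\eta_k+C\int_k^{k+1}|\dot Q|$ from a \L{}ojasiewicz inequality that is never proved. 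It would yield a geometric rate, far more than is needed. This non-variational problem offers no structure for it: no epiperimetric inequality is available, and the Weiss energy itself is not a Lyapunov function.
\end{itemize}
So the decisive estimate is assumed rather than proved. Two smaller points: $A\ge0$ is not justified here, since every symmetric $A$ with $\operatorname{tr}A=1$ gives a global solution ($\ker A$ is null). Also, the multi-patch reduction for regular points is irrelevant at singular points. Neither is needed.

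\textbf{The missing idea is that $|Q(t)|^2$ itself is a Lyapunov quantity whose dissipation is exactly $F$, so no rate is required.}
\begin{itemize}
\item From the ODE and $\operatorname{tr}Q=0$, $\frac12\frac{d}{dt}|Q|^2=\kappa_n\int_{\Lambda_t\cap B_1}x\cdot Q(t)x\,dx$.
\item Set $R_t:=w-\frac12x\cdot(\frac1nI+Q(t))x$. The defining condition $\nabla w=0$ on $\Lambda_t$ reads $(\frac1nI+Q)x=-\nabla R_t(x)$ there. Hence $x\cdot Qx=-\frac1n|x|^2-x\cdot\nabla R_t(x)$, and so $\frac12\frac{d}{dt}|Q|^2=-\frac{\kappa_n}{n}F(t)-\kappa_n\int_{\Lambda_t\cap B_1}x\cdot\nabla R_t\,dx$.
\item The error is absorbed purely qualitatively. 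On $\mathcal{A}_0=\{1/2<|x|<1\}$ it is at most $2\varepsilon(t)F_0(t)$. Here $F_0(t)=\int_{\Lambda_t\cap\mathcal{A}_0}|x|^2\,dx$ and $\varepsilon(t)=\|\nabla R_t\|_{L^\infty(\overline{\mathcal{A}_0})}\to0$ by compactness.
\item On the $k$-th dyadic annulus, rescaling by $2^k$ turns the error into the outer-annulus error at time $t+k\log2$. It also produces a drift term bounded by $|Q(t+k\log2)-Q(t)|\,F_0(t+k\log2)$, with $|Q(t+k\log2)-Q(t)|\le C\int_t^{t+k\log2}F$.
\item After integrating in $t$, the drift terms total at most $C\mu_T\bigl(\int_T^SF\,dt+1\bigr)$, where $\mu_T=\sup_{s\ge T}F_0(s)\to0$.
\end{itemize}
Choosing $T$ large absorbs everything into the dissipation. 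Boundedness of $|Q|^2$ then gives $\int_T^\infty F\,dt<\infty$, hence $\int_T^\infty|\dot Q|\,dt<\infty$.
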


Theorem~\ref{thm:main} may be viewed as the analogue, for
\eqref{eq:main}, of the uniqueness of blow-ups at singular points in the
classical obstacle problem.  The analogy is only partial.  In the obstacle
problem, the sign of the solution, the monotonicity of the contact set, and
the variational structure all enter in an essential way.  In the present
problem the equation depends on $\nabla u$ rather than on $u$ itself.  The
inactive set is therefore a critical set, not a positivity set or a
coincidence set, and different connected components of it may correspond to
different constant values of $u$.  This prevents a direct use of the usual
Weiss--Monneau argument, since no sign-definite coercive quantity is
available at the singular point.

A further difficulty is that the possible quadratic blow-ups are not
isolated.  They form a finite-dimensional family, and the
tangent directions to this family are the trace-free quadratic harmonic
polynomials.  Thus the essential question is whether the trace-free
quadratic component of the rescalings can drift along this family as the
scale tends to zero.

The proof is based on a direct analysis of this finite-dimensional motion.
For the rescaled functions $u_r$, we separate the spherical average, the
trace-free quadratic part, and the remaining higher modes.  Equivalently, in
logarithmic scale $r=e^{-t}$, we write the quadratic part in the form
\[
        q_{B(t)}(x)
        =
        \frac{|x|^2}{2n}
        +
        \frac12 x\cdot B(t)x,
        \qquad \operatorname{tr} B(t)=0 .
\]
Projecting the equation onto the trace-free quadratic modes gives a
differential equation for \(B(t)\).  This identity is the starting
point of the argument.  It keeps the nonlinear term
$\chi_{\{|\nabla u_t|>0\}}$ exactly, through moments of the rescaled
inactive set.

The quantity $|B(t)|^2$ then satisfies a Lyapunov identity.  Its principal
term has the correct sign and measures the size of the inactive set, but an
additional term involving the remainder
\[
        R_t := u_t-q_{B(t)}
\]
has no sign.  Controlling this term is the main estimate.  The control is
obtained by decomposing the unit ball into dyadic annuli.  An error on an
inner annulus at time $t$ becomes, after rescaling, an error on the fixed
outer annulus at a later time $t+k\log 2$.  On this outer annulus the
remainder and the inactive-set mass tend to zero along large scales, because
all blow-up limits are quadratic elements of $\mathcal Q$.  The remaining terms are estimated after integration in \(t\), using the
smallness of the corresponding tail suprema.

This gives finite total dissipation:
\[
        \int_T^\infty
        \int_{\Lambda_t\cap B_1} |x|^2\,dx\,dt <\infty .
\]
The ODE for $B(t)$ then implies that $B$ has finite total variation on
large logarithmic scales, and hence
\[
        B(t)\longrightarrow B_\infty
        \qquad \text{as } t\to\infty .
\]
It follows that every subsequential quadratic blow-up has the same
trace-free part $B_\infty$.  Therefore the quadratic blow-up is unique, and
the convergence of the full rescalings follows from compactness and the
elliptic estimates.

The paper is organized as follows.  In Section~\ref{sec:preliminaries}
we recall the basic estimates, scaling properties, and the definition of
singular points.  
The proof of Theorem~\ref{thm:main} is completed in
Section~\ref{pfmain}.

\section{preliminaries}
\label{sec:preliminaries}

Let $n\ge 2$, let $B_r\subset\Rn$ denote the ball centered at the origin, and let \(u\) solve
\[
        \Delta u=\chi_{\{|\nabla u|>0\}} \quad\hbox{a.e. in } B_1,
\]
It was shown in \cite{PSU2012} that $u\in C^{1,1}_{\rm loc}(B_1).$
Set
\[
        \Lambda:=\{|\nabla u|=0\},\qquad
        \Omega:=\{|\nabla u|>0\},
\]
so that the equation can also be written as
\[
        \Delta u=1-\chi_{\Lambda}\quad\hbox{a.e. in }B_1.
\]
The free boundary is exactly $\partial\Omega\cap B_1.$
We assume throughout that
\[
        0\in \partial\Omega\cap B_1,
        \qquad \nabla u(0)=0.
\]
For $r>0$ define the quadratic rescaling
\[
        u_r(x):=\frac{u(rx)-u(0)}{r^2}.
\]
Equivalently, with $r=e^{-t}$,
\[
        u_t(x):=e^{2t}\bigl(u(e^{-t}x)-u(0)\bigr),
        \qquad t>0.
\]
Then
\[
        \nabla u_t(x)=e^t\nabla u(e^{-t}x),
        \qquad
        \Delta u_t=\chi_{\{|\nabla u_t|>0\}}=1-\chi_{\Lambda_t},
\]
where
\[
        \Lambda_t:=\{|\nabla u_t|=0\}=e^t\Lambda.
\]
Since $\nabla u_t$ is continuous, $\Lambda_t$ is relatively closed and hence Lebesgue measurable; all set integrals below are Lebesgue integrals.

\begin{lemma}\label{lem:basic-compactness}
For every $R<\infty$ and every $0<\alpha<1$, the family $\{u_t\}_{t\gg1}$ is uniformly bounded in $C^{1,1}(B_R)$ and is precompact in $C^{1,\alpha}(B_R)$.
\end{lemma}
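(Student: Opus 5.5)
The plan is to transfer the interior $C^{1,1}$ estimate for $u$ to the rescalings, and then obtain precompactness from Arzel\`a--Ascoli. Since $u\in C^{1,1}_{\rm loc}(B_1)$ by \cite{PSU2012}, fix $\rho<1$ (say $\rho=1/2$) and put $M:=\|D^2u\|_{L^\infty(B_\rho)}<\infty$. For $t$ large enough that $e^{-t}R<\rho$, the chain rule gives
\[
D^2u_t(x)=D^2u(e^{-t}x),\qquad x\in B_R .
\]
Hence $\|D^2u_t\|_{L^\infty(B_R)}\le M$. This second-order bound is scale invariant and holds uniformly in $t$.

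Next I control the lower-order terms using the normalization at the origin. By construction $u_t(0)=0$. The standing assumption $\nabla u(0)=0$ gives $\nabla u_t(0)=e^t\nabla u(0)=0$. Since $\nabla u_t$ is Lipschitz with constant at most $M$ on $B_R$, we get $|\nabla u_t(x)|\le M|x|\le MR$. Integrating once more gives $|u_t(x)|\le \tfrac12 M|x|^2\le \tfrac12 MR^2$. Together these show that $\|u_t\|_{C^{1,1}(B_R)}\le C(M,R)$ for all $t\ge t_R:=\log(R/\rho)$, which is the uniform boundedness claim.

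For precompactness, uniform $C^{1,1}(B_R)$ bounds make $\{u_t\}$ and $\{\nabla u_t\}$ equibounded and equi-Lipschitz on $\overline{B_R}$. Arzel\`a--Ascoli then yields, along any sequence $t_j\to\infty$, a subsequence with $u_{t_j}\to u_\infty$ in $C^1(\overline{B_R})$, and $u_\infty\in C^{1,1}$ with the same bound. To upgrade the convergence to $C^{1,\alpha}$, I interpolate: for $w=\nabla u_{t_j}-\nabla u_\infty$,
\[
[w]_{C^{0,\alpha}}\le [w]_{C^{0,1}}^{\alpha}\,(2\|w\|_{L^\infty})^{1-\alpha}\le (2M)^{\alpha}(2\|w\|_{L^\infty})^{1-\alpha}\to0 .
\]
This gives convergence in $C^{1,\alpha}(B_R)$, so the family is precompact there.

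The only substantive input is the optimal $C^{1,1}$ regularity from \cite{PSU2012}; everything after that is routine. The one point needing care is that the normalization $\nabla u(0)=0$ is essential. Without it, $\nabla u_t(0)=e^t\nabla u(0)$ would blow up and the family would not be bounded even in $C^1$.
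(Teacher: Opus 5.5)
Your proof is correct and follows essentially the same route as the paper: the scale-invariant Hessian bound $D^2u_t(x)=D^2u(e^{-t}x)$ on $B_R$ for large $t$, and the normalization $\nabla u(0)=0$ to get $|\nabla u_t(x)|\le M|x|$ and $|u_t(x)|\le \frac{M}{2}|x|^2$. The only difference is that you prove the compact embedding $C^{1,1}(B_R)\Subset C^{1,\alpha}(B_R)$ explicitly via Arzel\`a--Ascoli and the H\"older interpolation inequality, whereas the paper simply invokes it.
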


\begin{proof}
Choose $T_R$ so large that $e^{-t}B_R\Subset B_1$ for all $t\ge T_R$. On a fixed compact ball containing all such $e^{-t}B_R$, the local $C^{1,1}$ bound gives a constant $M_R$ with $|D^2u|\le M_R$ a.e. Hence
\[
        D^2u_t(x)=D^2u(e^{-t}x)
\]
is bounded by $M_R$ a.e. on $B_R$. Moreover, using $\nabla u(0)=0$,
\[
        |\nabla u_t(x)|
        =e^t|\nabla u(e^{-t}x)-\nabla u(0)|
        \le M_R|x|,
\]
and Taylor's estimate gives
\[
        |u_t(x)|\le \frac{M_R}{2}|x|^2.
\]
Thus $\{u_t\}_{t\ge T_R}$ is uniformly bounded in $C^{1,1}(B_R)$. The compact embedding
\[
        C^{1,1}(B_R)\Subset C^{1,\alpha}(B_R),\qquad 0<\alpha<1,
\]
gives the claimed precompactness.
\end{proof}

Let
\[
        S_0:=\{B\in\mathbb R^{n\times n}: B^T=B,\ \tr B=0\}.
\]
For $B\in S_0$ put
\[
        \psi_B(x):=\frac12 x\cdot Bx,
        \qquad
        p_0(x):=\frac{|x|^2}{2n},
        \qquad
        q_B(x):=p_0(x)+\psi_B(x).
\]
Then $\Delta p_0=1$, $\Delta\psi_B=0$, and therefore $q_B$ is a quadratic homogeneous polynomial satisfying $\Delta q_B=1$.  Conversely, every quadratic homogeneous polynomial $q$ with $\Delta q=1$ is uniquely of this form.  We write
\[
        Q:=\{q_B:B\in S_0\}.
\]
\begin{definition}[Singular point]
A point \(x_0\) is called a singular point of the free boundary if there exists
\(r_k\downarrow0\) such that
\[
u_{x_0,r_k}(x):=\frac{u(x_0+r_kx)-u(x_0)}{r_k^2}
\to q\in Q\quad\hbox{in }C^{1,\alpha}_{\rm loc}(\Rn).
\]
\end{definition}

\begin{proposition}[Quadratic-classification] \cite[Theorem 3.22, 3.23]{PSU2012} \label{QC}
Assume $0$ is a singular point of the free boundary. For every sequence $t_j\to\infty$ there exists a subsequence, still denoted by $t_j$, and a polynomial $q\in Q$ such that
\[
        u_{t_j}\to q\quad\hbox{in }C^{1,\alpha}_{\rm loc}(\Rn)
\]
for every $0<\alpha<1$.
\end{proposition}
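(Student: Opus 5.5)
Proposition~\ref{QC} is cited from \cite[Theorems 3.22, 3.23]{PSU2012}, but I would check it directly in three steps: compactness, identification of every limit as a global solution, and classification of those global solutions.

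\textbf{Compactness and limit equation.} Fix a sequence $t_j\to\infty$. By Lemma~\ref{lem:basic-compactness} and a diagonal argument over $R=1,2,\dots$, a subsequence converges in $C^{1,\alpha}_{\rm loc}(\Rn)$ for every $\alpha<1$ to some $u_0\in C^{1,1}_{\rm loc}(\Rn)$. This limit satisfies $u_0(0)=0$, $\nabla u_0(0)=0$ and $|u_0(x)|\le M|x|^2$. Next I would show that $\Delta u_0=\chi_{\{|\nabla u_0|>0\}}$:
\begin{itemize}
\item On the open set $\{|\nabla u_0|>0\}$, uniform convergence of gradients gives $|\nabla u_{t_j}|>0$ on compact subsets for large $j$. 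Hence $\Delta u_{t_j}=1$ there, and so $\Delta u_0=1$ there.
\item On the interior of $\{\nabla u_0=0\}$, $u_0$ is locally constant, so $\Delta u_0=0$.
\item On the remaining set, $\partial\{|\nabla u_0|>0\}$, I need that it has measure zero, or else work a.e.\ directly. Since $D^2u_0$ exists a.e.\ and $\nabla u_0=0$ on $\{\nabla u_0=0\}$, we get $D^2u_0=0$ at a.e.\ point of that set, by the standard fact that the derivative of a Lipschitz map vanishes a.e.\ on its zero set. So $\Delta u_0=0=\chi_{\{|\nabla u_0|>0\}}$ a.e.\ there as well.
\end{itemize}
Thus $u_0$ is a global solution with quadratic growth.

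\textbf{Ruling out non-quadratic limits.} This is the main step. Every blow-down and blow-up of $u_0$ must again be a global solution, and I would classify global solutions in the spirit of \cite{PSU2012}. The possibilities are:
\begin{itemize}
\item half-space solutions $\tfrac12 (x\cdot e)_+^2$, together with their variants with one patch;
\item quadratic polynomials in $Q$;
\item the polynomial-type alternatives ruled out by the density of $\Lambda$.
\end{itemize}
The singular-point hypothesis gives one sequence $r_k$ with limit $q\in Q$. I would attach a Weiss-type energy to the rescalings, as in \cite{CSS2004}:
\[
W(r)=r^{-n-2}\int_{B_r}\Bigl(\tfrac12|\nabla u|^2+u\Bigr)-r^{-n-3}\int_{\partial B_r}u^2 ,
\]
with $u(0)$ subtracted. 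Its monotonicity, or at least the existence of its limit $W(0+)$, identifies every blow-up limit as homogeneous of degree two with the same energy. The energy of elements of $Q$ differs from that of half-space solutions. Since $W(0+)$ is the same along all sequences and equals the energy of $q$, every subsequential limit is a homogeneous degree-two solution with the polynomial energy level. I expect this to be the main obstacle. The issue is that $u_0$ could a priori have an inactive set of positive density, and the term $\int u\,\chi_\Lambda$ in the energy has no sign. To handle it I would use the reduction of \cite{CSS2004} to one patch near $0$: since $u$ is constant on each component of $\Lambda$, and $0$ lies on the free boundary, the relevant patch has $u=u(0)$. This restores the obstacle-problem structure, where the homogeneous solutions are exactly the half-spaces and $Q$.

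\textbf{Conclusion.} Once the limit is homogeneous of degree two with energy equal to that of $Q$, it has $|\Lambda(u_0)|=0$, because the energy gap detects $|\Lambda\cap B_1|$. Then $\Delta u_0=1$ a.e., so $u_0$ is a homogeneous harmonic-plus-$p_0$ quadratic, that is, $u_0=q_B\in Q$. Convergence for every $\alpha<1$ is automatic from the diagonal extraction.
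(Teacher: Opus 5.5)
Your first step is sound: by compactness, every limit $u_0$ is a global solution of $\Delta u_0=\chi_{\{|\nabla u_0|>0\}}$ with $|u_0|\le M|x|^2$. Your last step is also sound. If $u_0$ is $2$-homogeneous, Euler's identity $x\cdot\nabla u_0=2u_0$ gives $u_0=0$ on $\Lambda(u_0)$. Then $W(u_0)=\bigl(|B_1|-|\Lambda(u_0)\cap B_1|\bigr)/(2(n+2))$, so the polynomial energy level forces $|\Lambda(u_0)|=0$ and $u_0\in Q$. The paper does not prove this proposition; it imports it from \cite{PSU2012}.

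The gap is your middle step, which carries the entire content of the statement: that \emph{every} blow-up is $2$-homogeneous with the same energy as the quadratic limit along $r_k$. For this equation the Weiss functional is not monotone. Take $u_r=(u(rx)-u(0))/r^2$, $\Lambda_r=\{\nabla u_r=0\}$ and $W(r)=\int_{B_1}(|\nabla u_r|^2+2u_r)\,dx-2\int_{\partial B_1}u_r^2\,dS$. Then
\[
W'(r)=\frac2r\int_{\partial B_1}(x\cdot\nabla u_r-2u_r)^2\,dS-\frac4r\int_{\Lambda_r\cap B_1}u_r\,dx .
\]
On the rescaled copy of a component $P$ of $\Lambda$ with constant value $c_P$, one has $u_r=(c_P-u(0))/r^2$, which can have either sign. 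So neither monotonicity nor the existence of $W(0+)$ is available. Even if $W(0+)$ existed, you would only learn that $\rho\mapsto W(\rho;u_0)$ is constant for each blow-up $u_0$. By the same formula, that gives homogeneity only if the analogous $\Lambda(u_0)$-term vanishes, which is the same sign problem one level down.

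Your proposed remedy does not close this. That $0\in\Lambda$ only tells you $u=u(0)$ on the component containing $0$. The error term involves \emph{all} of $\Lambda\cap B_r$, and nothing in your argument prevents other components, with values $c_P\neq u(0)$ of either sign, from accumulating at $0$. The one-patch reduction of \cite{CSS2004} is established near \emph{regular} points and relies on the structure available there: half-space blow-ups and a thick inactive set. At a singular point the blow-up $q\in Q$ has $|\Lambda(q)|=0$, and no analogous mechanism is at hand. This multi-patch obstruction is exactly what the paper singles out as blocking the Weiss--Monneau approach; compare the value-sign hypothesis in \cite{DuTangWang2024}.

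The reduction to obstacle-type structure that \emph{is} valid, via Euler's identity, applies only to a limit already known to be homogeneous, so invoking it here is circular. As written, your argument establishes only that blow-ups are quadratically growing global solutions. The implication ``one quadratic blow-up $\Rightarrow$ every blow-up lies in $Q$'' remains unproved.
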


\begin{remark}
For the classical obstacle problem, Weiss-type and Monneau-type monotonicity formulas are standard tools for studying singular blow-ups; see, for example, \cite{Caffarelli1998,Weiss1999,Monneau2003,PSU2012}.  Related issues in no-sign obstacle-type problems have also been studied, for example in \cite{ChenFengLi2022}.  In the superconductivity problem, Du, Tang and Wang constructed corresponding monotonicity formulas and obtained uniqueness results under restrictive sufficient hypotheses, including a value sign condition in the relevant setting \cite{DuTangWang2024}.  The argument below is completely different:  it does not use a Monneau functional and does not use the sign of $u-u(0)$ on $\Lambda$.
\end{remark}

\section{Proof of Theorem \ref{thm:main}}
\label{pfmain}

We first set up the notation used throughout the proof.  For each logarithmic
scale \(t\), we write the rescaling \(u_t\) as
\[
        u_t=q_{B(t)}+R_t,
\]
where \(q_{B(t)}\in \mathcal Q\) is chosen by orthogonal projection onto the
quadratic harmonic modes.  The proof consists in showing that the coefficient
\(B(t)\) has a limit as \(t\to\infty\).  Once this is known, the uniqueness of
the quadratic blow-up follows from compactness and the classification of
quadratic blow-up limits.

Throughout the proof, \(C_n\) denotes a positive constant depending only on
\(n\), whose value may change from line to line.
\subsection{Quadratic harmonic projection}

For each $t$ define $B(t)\in S_0$ as the $L^2(\partial B_1)$-projection of $u_t-p_0$ onto the finite-dimensional space of trace-free quadratic harmonic polynomials:
\begin{equation}\label{eq:projection}
        \int_{\partial B_1}\bigl(u_t-p_0-\psi_{B(t)}\bigr)\psi_C\,dS=0
        \qquad\forall C\in S_0.
\end{equation}
We then set
\[
        R_t:=u_t-q_{B(t)},
        \qquad
        A(t):=\frac1n I+B(t).
\]
Thus
\[
        q_{B(t)}(x)=\frac12 x\cdot A(t)x,
        \qquad \tr A(t)=1,
        \qquad u_t=q_{B(t)}+R_t,
\]
and
\[
        \int_{\partial B_1}R_t\psi_C\,dS=0
        \qquad\forall C\in S_0.
\]

\begin{lemma}\label{lem:sphere-moment}
For all $B,C\in S_0$,
\begin{equation}\label{eq:sphere-moment}
        \int_{\partial B_1}\psi_B\psi_C\,dS
        =\frac{|\partial B_1|}{2n(n+2)}B:C,
\end{equation}
where $B:C=\tr(BC)$.
\end{lemma}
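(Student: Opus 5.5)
We need to compute $\int_{\partial B_1} x_ix_jx_kx_l\,dS$. By symmetry this fourth moment is $c(\delta_{ij}\delta_{kl}+\delta_{ik}\delta_{jl}+\delta_{il}\delta_{jk})$ for some constant $c$, because odd moments vanish and the expression must be invariant under permutations and under the orthogonal group. The plan is to determine $c$ first and then contract against $B$ and $C$.

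To find $c$, contract over $i=j$ and $k=l$. Since $|x|^4=1$ on $\partial B_1$, the left side becomes $|\partial B_1|$. On the right we get $c(n^2+n+n)=c\,n(n+2)$. Hence
\[
c=\frac{|\partial B_1|}{n(n+2)}.
\]
Alternatively, one can get the same constant by integrating $x_1^4$ via Gaussian integrals.

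Next write $\psi_B\psi_C=\frac14 B_{ij}C_{kl}x_ix_jx_kx_l$ and integrate term by term. The $\delta_{ij}\delta_{kl}$ term contributes $\tr B\,\tr C=0$, since $B,C\in S_0$. The other two terms each contribute $\tr(BC)=B:C$, using the symmetry of $B$ and $C$. Therefore
\[
\int_{\partial B_1}\psi_B\psi_C\,dS=\frac14\cdot c\cdot 2\,B:C=\frac{|\partial B_1|}{2n(n+2)}\,B:C,
\]
which is exactly \eqref{eq:sphere-moment}.

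The only delicate point is justifying the isotropic form of the fourth-moment tensor. This follows from invariance of the surface measure under coordinate permutations and reflections. Those symmetries kill every moment with an odd exponent in some coordinate, and they leave only $\int x_i^4$ and $\int x_i^2x_j^2$ ($i\neq j$). Rotation invariance, for instance under a rotation by $\pi/4$ in the $(x_1,x_2)$-plane, then gives $\int x_1^4=3\int x_1^2x_2^2$, which is the required isotropic form.
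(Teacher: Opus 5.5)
Your proof is correct and follows the same route as the paper: apply the isotropic fourth-moment identity on the sphere, then contract it against the trace-free symmetric matrices $B$ and $C$. The paper simply quotes the fourth-moment identity as standard, whereas you also derive the constant $|\partial B_1|/(n(n+2))$ by contraction and justify the isotropic form via reflections, permutations and a $\pi/4$ rotation (giving $\int x_1^4=3\int x_1^2x_2^2$), and all of these steps check out.
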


\begin{proof}
The standard fourth-moment identity on the unit sphere gives
\[
        \int_{\partial B_1}x_ix_jx_kx_l\,dS
        =\frac{|\partial B_1|}{n(n+2)}
        (\delta_{ij}\delta_{kl}+\delta_{ik}\delta_{jl}+\delta_{il}\delta_{jk}).
\]
Therefore
\[
        \int_{\partial B_1}(x\cdot Bx)(x\cdot Cx)\,dS
        =\frac{|\partial B_1|}{n(n+2)}
        \bigl((\tr B)(\tr C)+2B:C\bigr)
        =\frac{2|\partial B_1|}{n(n+2)}B:C,
\]
because $B$ and $C$ are trace-free.  Since $\psi_B=(x\cdot Bx)/2$, \eqref{eq:sphere-moment} follows.
\end{proof}

\begin{lemma}\label{lem:projection-continuity}
The element $B(t)$ defined by \eqref{eq:projection} exists uniquely.  Moreover, if $v_j\to v$ in $L^2(\partial B_1)$ and $\Pi(v_j),\Pi(v)\in S_0$ are the corresponding projections determined by
\[
        \int_{\partial B_1}\bigl(v_j-p_0-\psi_{\Pi(v_j)}\bigr)\psi_C\,dS=0,
        \qquad
        \int_{\partial B_1}\bigl(v-p_0-\psi_{\Pi(v)}\bigr)\psi_C\,dS=0,
\]
for all $C\in S_0$, then $\Pi(v_j)\to\Pi(v)$ in $S_0$.
\end{lemma}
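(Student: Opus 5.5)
The proof reduces the projection condition to a finite-dimensional linear system whose matrix is the Gram matrix computed in Lemma~\ref{lem:sphere-moment}. Fix $v\in L^2(\partial B_1)$; for $u_t$ this holds because $u_t$ is continuous on $\overline{B_1}$ by Lemma~\ref{lem:basic-compactness}. The condition defining $\Pi(v)$ is linear in the unknown $B\in S_0$. By Lemma~\ref{lem:sphere-moment} it can be rewritten as
\[
        \frac{|\partial B_1|}{2n(n+2)}\,B:C=\int_{\partial B_1}(v-p_0)\psi_C\,dS
        \qquad\forall C\in S_0 .
\]
The right-hand side is a linear functional $\ell_v(C)$ on the finite-dimensional inner product space $(S_0,:)$. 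It is well defined and bounded: $\psi_C$ is a polynomial, hence in $L^2(\partial B_1)$, and Cauchy--Schwarz gives $|\ell_v(C)|\le \|v-p_0\|_{L^2(\partial B_1)}\|\psi_C\|_{L^2(\partial B_1)}\le C_n\|v-p_0\|_{L^2(\partial B_1)}|C|$, where the last step again uses Lemma~\ref{lem:sphere-moment}.

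By the Riesz representation theorem on $S_0$ with the Frobenius inner product $B:C=\tr(BC)$, there is exactly one $B\in S_0$ satisfying the system. Explicitly, if $\{E_k\}$ is an orthonormal basis of $S_0$, then
\[
        \Pi(v)=\frac{2n(n+2)}{|\partial B_1|}\sum_k \ell_v(E_k)\,E_k .
\]
Uniqueness also follows directly. If $B_1,B_2$ both solve the system, then subtracting gives $(B_1-B_2):C=0$ for all $C\in S_0$. Choosing $C=B_1-B_2\in S_0$, which is symmetric so that $\tr(C^2)=|C|^2$, yields $B_1=B_2$. Applying this with $v=u_t$ gives existence and uniqueness of $B(t)$.

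For continuity, the formula shows that $v\mapsto\Pi(v)$ is affine: $\Pi(v)$ is a linear function of $v$ minus the fixed matrix $\Pi(0)$. Hence
\[
        |\Pi(v_j)-\Pi(v)|\le C_n\sum_k|\ell_{v_j}(E_k)-\ell_v(E_k)|\le C_n\|v_j-v\|_{L^2(\partial B_1)}\to0 .
\]
No step here is a real obstacle. The only point needing care is that the Gram form from Lemma~\ref{lem:sphere-moment} is a positive multiple of the Frobenius inner product on $S_0$, which makes the system nondegenerate. This relies on trace-freeness having removed the $(\tr B)(\tr C)$ term.
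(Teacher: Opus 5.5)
Your proof is correct and follows essentially the same route as the paper: both use Lemma~\ref{lem:sphere-moment} to identify the Gram form on $S_0$ with $c_n\,B:C$, reduce the projection condition to a nondegenerate finite-dimensional linear system, and obtain continuity from Cauchy--Schwarz applied to the moments $\int_{\partial B_1}(v-p_0)\psi_{E_i}\,dS$. The only difference is cosmetic: you choose a Frobenius-orthonormal basis, so the Gram matrix is $c_n I$ and you get an explicit formula and a Lipschitz bound, whereas the paper works with an arbitrary basis and inverts a positive definite Gram matrix.
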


\begin{proof}
By Lemma \ref{lem:sphere-moment}, the bilinear form
\[
        G(B,C):=\int_{\partial B_1}\psi_B\psi_C\,dS
\]
satisfies
\[
        G(B,C)=c_n B:C,
        \qquad c_n:=\frac{|\partial B_1|}{2n(n+2)}>0.
\]
In particular $G$ is an inner product on the finite-dimensional space $S_0$, because
\[
        G(B,B)=c_n |B|^2,
\]
and hence $G(B,B)=0$ implies $B=0$.

Let $N:=\dim S_0$ and choose any basis $E_1,\ldots,E_N$ of $S_0$.  For a given $v\in L^2(\partial B_1)$ we seek
\[
        \Pi(v)=\sum_{j=1}^N b_j E_j.
\]
The orthogonality conditions are required for every $C\in S_0$.  Since the conditions are linear in $C$, it is enough to impose them for the basis vectors $C=E_i$.  Thus the unknown coefficients $b_j$ must solve
\[
        \sum_{j=1}^N b_j\int_{\partial B_1}\psi_{E_j}\psi_{E_i}\,dS
        =\int_{\partial B_1}(v-p_0)\psi_{E_i}\,dS,
        \qquad i=1,\ldots,N.
\]
Equivalently,
\[
        \sum_{j=1}^N G_{ij}b_j=m_i(v),
\]
where
\[
        G_{ij}:=\int_{\partial B_1}\psi_{E_j}\psi_{E_i}\,dS,
        \qquad
        m_i(v):=\int_{\partial B_1}(v-p_0)\psi_{E_i}\,dS.
\]
The matrix $G=(G_{ij})$ is invertible.  Indeed, if $\beta=(\beta_1,\ldots,\beta_N)$ and $B_\beta:=\sum_j\beta_jE_j$, then
\[
        \beta^T G\beta
        =\int_{\partial B_1}\psi_{B_\beta}^2\,dS
        =c_n |B_\beta|^2.
\]
Since $E_1,\ldots,E_N$ are linearly independent, $B_\beta=0$ only when $\beta=0$.  Hence $\beta^TG\beta>0$ for every nonzero $\beta$, so $G$ is positive definite and therefore invertible.

Consequently the coefficients are uniquely determined by the finite-dimensional linear system
\[
        b=G^{-1}m(v),
\]
where $b=(b_1,\ldots,b_N)^T$ and $m(v)=(m_1(v),\ldots,m_N(v))^T$.  This gives existence and uniqueness of $\Pi(v)$.  Since the equations hold for the basis vectors, they hold for every $C\in S_0$ by linearity.

It remains to prove continuity.  If $v_j\to v$ in $L^2(\partial B_1)$, then for each fixed $i$,
\[
        |m_i(v_j)-m_i(v)|
        =\left|\int_{\partial B_1}(v_j-v)\psi_{E_i}\,dS\right|
        \le \|v_j-v\|_{L^2(\partial B_1)}\,\|\psi_{E_i}\|_{L^2(\partial B_1)}
        \to 0.
\]
Thus $m(v_j)\to m(v)$ in $\mathbb R^N$.  Because $G^{-1}$ is a fixed finite-dimensional matrix,
\[
        b(v_j)=G^{-1}m(v_j)\to G^{-1}m(v)=b(v).
\]
Therefore
\[
        \Pi(v_j)=\sum_{j=1}^N b_j(v_j)E_j
        \to
        \sum_{j=1}^N b_j(v)E_j=\Pi(v)
        \quad\hbox{in }S_0.
\]
This proves the continuity statement.
\end{proof}

\subsection{Evolution of the quadratic coefficient}

The key point is that the finite-dimensional coefficient $B(t)$ satisfies an exact differential equation.  For a matrix $M$, write
\[
        \tf M:=M-\frac{\tr M}{n}I.
\]

\begin{lemma}\label{lem:moment-identity}
For every $C\in S_0$, the function
\[
        a_C(t):=\int_{\partial B_1}(u_t-p_0)\psi_C\,dS
\]
is locally absolutely continuous, and for a.e. $t$,
\begin{equation}\label{eq:a-prime}
        a_C'(t)=\int_{\Lambda_t\cap B_1}\psi_C(x)\,dx.
\end{equation}
\end{lemma}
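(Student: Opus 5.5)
The plan is to differentiate $a_C$ directly in $t$ and convert the resulting boundary integral into a bulk integral by Green's second identity against the harmonic polynomial $\psi_C$. The equation $\Delta u_t=1-\chi_{\Lambda_t}$ then produces the inactive-set term.

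\emph{Step 1: the time derivative of the rescaling.} Since $u\in C^{1,1}_{\rm loc}(B_1)\subset C^1$, the map $(t,x)\mapsto u_t(x)=e^{2t}(u(e^{-t}x)-u(0))$ is $C^1$ on $\{t\ge T\}\times\overline{B_1}$ for $T$ large. Its derivative is
\[
        \partial_t u_t(x)=2u_t(x)-x\cdot\nabla u_t(x).
\]
This is continuous and bounded uniformly for $t$ in compact sets and $x\in\partial B_1$. Differentiation under the integral sign is therefore legitimate, $a_C$ is in fact $C^1$ (so certainly locally absolutely continuous), and
\[
        a_C'(t)=\int_{\partial B_1}\bigl(2u_t-\partial_\nu u_t\bigr)\psi_C\,dS,
\]
using $x\cdot\nabla u_t=\partial_\nu u_t$ on $\partial B_1$. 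The term $p_0$ is independent of $t$, so it does not appear.

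\emph{Step 2: Green's identity.} Since $u_t\in W^{2,\infty}(B_1)$ and $\psi_C$ is a harmonic polynomial, Green's second identity holds:
\[
        \int_{B_1}\bigl(\psi_C\Delta u_t-u_t\Delta\psi_C\bigr)\,dx
        =\int_{\partial B_1}\bigl(\psi_C\,\partial_\nu u_t-u_t\,\partial_\nu\psi_C\bigr)\,dS .
\]
This follows, for instance, by approximating $u_t$ by mollifications, which converge in $W^{2,p}$ and whose traces converge. We have $\Delta\psi_C=0$, and $\partial_\nu\psi_C=2\psi_C$ on $\partial B_1$ by homogeneity of degree $2$. Hence the right-hand side equals $-a_C'(t)$, while the left-hand side equals $\int_{B_1}\psi_C(1-\chi_{\Lambda_t})\,dx$.

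\emph{Step 3: cancellation of the constant part.} Because $C$ is trace-free, $\int_{B_1}\psi_C\,dx=\frac{1}{2}\sum_{i,j} C_{ij}\int_{B_1}x_ix_j\,dx=c\,\tr C=0$. Therefore $-a_C'(t)=-\int_{\Lambda_t\cap B_1}\psi_C\,dx$, which is \eqref{eq:a-prime}, valid in fact for every $t$.

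The only delicate point is justifying Steps 1 and 2 with merely $C^{1,1}$ regularity. This is handled by the $C^1$ joint regularity in $(t,x)$ and the validity of Green's formula in $W^{2,\infty}$. Since $\Lambda_t$ is measurable, the bulk integral makes sense.
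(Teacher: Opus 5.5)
Your proof is correct and takes essentially the same route as the paper: you differentiate the spherical moment in the scale parameter and apply Green's second identity against the degree-two harmonic $\psi_C$, using $\partial_\nu\psi_C=2\psi_C$, to convert the boundary term into $\int\psi_C\Delta(\cdot)$ and read off the inactive-set integral. The differences are cosmetic: you work directly with $u_t$ on the fixed unit sphere and remove the constant part of $\Delta u_t$ via $\int_{B_1}\psi_C\,dx=0$, whereas the paper first subtracts $p_0$ (so $\Delta h=-\chi_\Lambda$) and works on $\partial B_r$ before rescaling; your sharper conclusion that $a_C\in C^1$ and the identity holds for every $t$ is also valid.
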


\begin{proof}
Put
\[
        h(x):=u(x)-u(0)-p_0(x).
\]
Then $h\in W^{2,\infty}_{\rm loc}(B_1)$ and
\[
        \Delta h=\chi_{\{|\nabla u|>0\}}-1=-\chi_\Lambda
        \quad\hbox{a.e. in }B_1.
\]
We regard $\psi_C$ as a smooth homogeneous harmonic polynomial of degree two on $\Rn$.  For $0<r<1$ define
\[
        A_C(r):=\int_{\partial B_1}\frac{h(r\theta)}{r^2}\psi_C(\theta)\,dS_\theta.
\]
Since $h\in C^{1,1}_{\rm loc}$, $A_C$ is locally absolutely continuous.  Differentiating under the integral sign and then changing variables $x=r\theta$ gives, for a.e. $r$,
\begin{align*}
        A_C'(r)
        &=r^{-n-3}\int_{\partial B_r}\psi_C\left(\partial_\nu h-\frac{2h}{r}\right)dS \\
        &=r^{-n-3}\int_{\partial B_r}\bigl(\psi_C\partial_\nu h-h\partial_\nu\psi_C\bigr)dS,
\end{align*}
where we used $\partial_\nu\psi_C=2\psi_C/r$ on $\partial B_r$.

The vector field $\psi_C\nabla h-h\nabla\psi_C$ belongs to $W^{1,\infty}_{\rm loc}$, and the divergence theorem for Sobolev vector fields on balls yields, for a.e. $r$,
\[
        \int_{\partial B_r}\bigl(\psi_C\partial_\nu h-h\partial_\nu\psi_C\bigr)dS
        =\int_{B_r}\bigl(\psi_C\Delta h-h\Delta\psi_C\bigr)dx.
\]
All boundary identities in this lemma are therefore understood for a.e. radius $r$; after the change of variables $r=e^{-t}$, the final differential identity is asserted for a.e. $t$, which is exactly what is needed below. Since $\Delta\psi_C=0$,
\[
        \int_{\partial B_r}\bigl(\psi_C\partial_\nu h-h\partial_\nu\psi_C\bigr)dS
        =-\int_{\Lambda\cap B_r}\psi_C(x)\,dx.
\]
Consequently
\[
        A_C'(r)=-r^{-n-3}\int_{\Lambda\cap B_r}\psi_C(x)\,dx.
\]
Now set $r=e^{-t}$.  Since $a_C(t)=A_C(e^{-t})$, we have
\[
        a_C'(t)=-rA_C'(r)
        =r^{-n-2}\int_{\Lambda\cap B_r}\psi_C(x)\,dx.
\]
With $x=ry$ and $\psi_C(ry)=r^2\psi_C(y)$, this becomes
\[
        a_C'(t)=\int_{\Lambda_t\cap B_1}\psi_C(y)\,dy,
\]
which proves \eqref{eq:a-prime}.
\end{proof}

\begin{proposition}\label{prop:center-ode}
The map $t\mapsto B(t)$ is locally absolutely continuous, and for a.e. $t$,
\begin{equation}\label{eq:center-ode}
        B'(t)=\kappa_n\,\tf\left(\int_{\Lambda_t\cap B_1}x\otimes x\,dx\right),
        \qquad
        \kappa_n:=\frac{n(n+2)}{|\partial B_1|}.
\end{equation}
In particular, if
\begin{equation}\label{eq:F-def}
        F(t):=\int_{\Lambda_t\cap B_1}|x|^2\,dx,
\end{equation}
then
\begin{equation}\label{eq:Bprime-bound}
        |B'(t)|\le C_nF(t)
        \qquad\hbox{for a.e. }t.
\end{equation}
\end{proposition}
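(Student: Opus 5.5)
The proof is linear algebra on top of Lemma~\ref{lem:moment-identity} and Lemma~\ref{lem:sphere-moment}. By the projection identity \eqref{eq:projection} and Lemma~\ref{lem:sphere-moment}, for every $C\in S_0$,
\[
        a_C(t)=\int_{\partial B_1}(u_t-p_0)\psi_C\,dS=\int_{\partial B_1}\psi_{B(t)}\psi_C\,dS=c_n\,B(t):C,
        \qquad c_n=\frac{|\partial B_1|}{2n(n+2)}.
\]
So $B(t)$ is recovered linearly from the finitely many functions $a_{E_i}(t)$, where $E_1,\dots,E_N$ is an orthonormal basis of $S_0$ for the Frobenius product:
\[
        B(t)=c_n^{-1}\sum_i a_{E_i}(t)E_i.
\]
Each $a_{E_i}$ is locally absolutely continuous by Lemma~\ref{lem:moment-identity}. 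Hence $B$ is locally absolutely continuous, and for a.e. $t$ we have $B'(t)=c_n^{-1}\sum_i a_{E_i}'(t)E_i$. We take the common full-measure set of $t$ on which all $N$ identities \eqref{eq:a-prime} hold.

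Next we compute $a_C'(t)$ using \eqref{eq:a-prime}. Set
\[
        M(t):=\int_{\Lambda_t\cap B_1}x\otimes x\,dx,
\]
a symmetric matrix. Since $\psi_C(x)=\tfrac12 x\cdot Cx$, we get
\[
        a_C'(t)=\tfrac12\,C:M(t)=\tfrac12\,C:\tf M(t),
\]
where the last equality holds because $C$ is trace-free and hence orthogonal to $I$. Thus $c_n B'(t):C=\tfrac12\,\tf M(t):C$ for all $C\in S_0$. Both $B'(t)$ and $\tf M(t)$ lie in $S_0$, so
\[
        B'(t)=\frac{1}{2c_n}\tf M(t)=\frac{n(n+2)}{|\partial B_1|}\tf M(t)=\kappa_n\tf M(t),
\]
which is \eqref{eq:center-ode}.

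For \eqref{eq:Bprime-bound}, note that $M(t)$ is positive semidefinite, so $|M(t)|\le\tr M(t)=F(t)$. Also $|\tf M|\le|M|$, since $\tf$ is an orthogonal projection for the Frobenius inner product. Therefore $|B'(t)|\le\kappa_n F(t)$.

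The only delicate point is the measure-theoretic bookkeeping: differentiability holds only for a.e. $t$, and this is handled by intersecting the finitely many full-measure sets coming from the basis elements $E_i$. Otherwise the argument is routine.
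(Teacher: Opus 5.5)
Your proof is correct and follows essentially the same route as the paper: recover $B(t)$ from the coordinates $a_{E_j}(t)=c_n\,B(t):E_j$ in a Frobenius-orthonormal basis of $S_0$, differentiate on the common full-measure set supplied by Lemma~\ref{lem:moment-identity}, and use that trace-free test matrices are orthogonal to $I$. Your chain $|\tf M(t)|\le |M(t)|\le \tr M(t)=F(t)$, which uses positive semidefiniteness of $M(t)$, makes the paper's unspecified constant explicit as $C_n=\kappa_n$.
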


\begin{proof}
Let
\[
        c_n:=\frac{|\partial B_1|}{2n(n+2)}.
\]
By the projection identity \eqref{eq:projection} and Lemma \ref{lem:sphere-moment}, for every $C\in S_0$,
\begin{equation}\label{eq:projection-coordinate-identity}
        a_C(t)=\int_{\partial B_1}\psi_{B(t)}\psi_C\,dS
        =c_n B(t):C.
\end{equation}
We first justify carefully that $B$ is locally absolutely continuous.  Let
$E_1,\ldots,E_N$ be a Frobenius-orthonormal basis of the finite-dimensional space $S_0$,
where
\[
        N=\dim S_0=\frac{n(n+1)}2-1.
\]
Write
\[
        B(t)=\sum_{j=1}^N b_j(t)E_j,
        \qquad b_j(t):=B(t):E_j.
\]
Taking $C=E_j$ in \eqref{eq:projection-coordinate-identity} gives
\[
        a_{E_j}(t)=c_n b_j(t),
        \qquad j=1,\ldots,N.
\]
By Lemma \ref{lem:moment-identity}, each $a_{E_j}$ is locally absolutely continuous.  Hence
\[
        b_j=c_n^{-1}a_{E_j}\in AC_{\rm loc}(0,\infty),
        \qquad j=1,\ldots,N.
\]
Since only finitely many coordinates are involved, it follows that
\[
        B\in AC_{\rm loc}\bigl((0,\infty);S_0\bigr).
\]
Indeed, on every compact interval $[T_1,T_2]\subset(0,\infty)$,
\[
        B(t)-B(s)
        =\sum_{j=1}^N \bigl(b_j(t)-b_j(s)\bigr)E_j
        =\int_s^t \sum_{j=1}^N b_j'(\tau)E_j\,d\tau,
\]
so the a.e. derivative of $B$ is
\[
        B'(t)=\sum_{j=1}^N b_j'(t)E_j\in S_0.
\]

We now differentiate the coordinate identities.  For each fixed basis element $E_j$, Lemma
\ref{lem:moment-identity} gives the identity for $a_{E_j}'(t)$ outside a null set.  Since the
basis is finite, the intersection of these full-measure sets is still a full-measure set.  Thus,
for a.e. $t$, all identities below hold simultaneously for $j=1,\ldots,N$:
\begin{align*}
        c_n B'(t):E_j
        &=a_{E_j}'(t) \\
        &=\int_{\Lambda_t\cap B_1}\psi_{E_j}(x)\,dx \\
        &=\frac12 E_j:\int_{\Lambda_t\cap B_1}x\otimes x\,dx.
\end{align*}
Therefore
\[
        B'(t):E_j
        =\kappa_n E_j:\int_{\Lambda_t\cap B_1}x\otimes x\,dx
        =\kappa_n E_j:\tf\left(\int_{\Lambda_t\cap B_1}x\otimes x\,dx\right),
\]
where $\kappa_n=n(n+2)/|\partial B_1|$.  The last equality uses $E_j\in S_0$, so $E_j$ is
orthogonal to scalar matrices.  Since both $B'(t)$ and
\[
        \tf\left(\int_{\Lambda_t\cap B_1}x\otimes x\,dx\right)
\]
belong to $S_0$, equality of their scalar products against the basis
$E_1,\ldots,E_N$ implies the vector identity
\[
        B'(t)=\kappa_n\,\tf\left(\int_{\Lambda_t\cap B_1}x\otimes x\,dx\right)
\]
for a.e. $t$, which proves \eqref{eq:center-ode}.  Finally,
\[
        \left|\tf\left(\int_{\Lambda_t\cap B_1}x\otimes x\,dx\right)\right|
        \le C_n\int_{\Lambda_t\cap B_1}|x|^2\,dx,
\]
which gives \eqref{eq:Bprime-bound}.
\end{proof}

\subsection{The Lyapunov identity}

The next step is to use $|B(t)|^2$ as a finite-dimensional Lyapunov quantity.

\begin{proposition}[Dissipation identity]\label{prop:lyapunov}
Define
\begin{equation}\label{eq:I-def}
        I(t):=\int_{\Lambda_t\cap B_1}x\cdot\nabla R_t(x)\,dx.
\end{equation}
Then for a.e. $t$,
\begin{equation}\label{eq:lyapunov}
        \frac12\frac{d}{dt}|B(t)|^2
        =-\frac{\kappa_n}{n}F(t)-\kappa_n I(t).
\end{equation}
\end{proposition}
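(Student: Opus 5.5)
Starting from the ODE of Proposition~\ref{prop:center-ode}, for a.e. $t$ we have
\[
        \frac12\frac{d}{dt}|B(t)|^2=B(t):B'(t)
        =\kappa_n\,B(t):\tf\left(\int_{\Lambda_t\cap B_1}x\otimes x\,dx\right)
        =\kappa_n\int_{\Lambda_t\cap B_1}x\cdot B(t)x\,dx .
\]
The last equality holds because $B(t)\in S_0$ is orthogonal to scalar matrices, so the trace-free part may be dropped. Everything therefore reduces to rewriting $x\cdot B(t)x$ pointwise on the inactive set $\Lambda_t$, using only the defining property $\nabla u_t=0$ there.

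On $\Lambda_t$ we have $\nabla u_t(x)=0$, hence $x\cdot\nabla u_t(x)=0$. Since $u_t=q_{B(t)}+R_t$ with $q_{B(t)}(x)=\frac12 x\cdot A(t)x$ and $A(t)=\frac1n I+B(t)$, Euler's identity for the homogeneous quadratic gives $x\cdot\nabla q_{B(t)}(x)=x\cdot A(t)x=\frac{|x|^2}{n}+x\cdot B(t)x$. Consequently, on $\Lambda_t$,
\[
        x\cdot B(t)x=-\frac{|x|^2}{n}-x\cdot\nabla R_t(x).
\]
Here $\nabla R_t$ is continuous, because $u_t\in C^{1,1}_{\rm loc}$, so this is a genuine pointwise identity on the closed set $\Lambda_t$.

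Integrating over $\Lambda_t\cap B_1$ gives
\[
        \int_{\Lambda_t\cap B_1}x\cdot B(t)x\,dx=-\frac1n F(t)-I(t),
\]
with $F$ as in \eqref{eq:F-def} and $I$ as in \eqref{eq:I-def}. Multiplying by $\kappa_n$ yields \eqref{eq:lyapunov}.

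The only points needing care are measure-theoretic. First, $|B|^2$ is locally absolutely continuous, being the square of a locally absolutely continuous finite-dimensional map. Second, its a.e. derivative equals $2B:B'$ on the full-measure set where Proposition~\ref{prop:center-ode} holds. Third, the integrals are finite, since $\Lambda_t$ is measurable and $\nabla R_t$ is bounded on $B_1$ by Lemma~\ref{lem:basic-compactness}. I expect no real obstacle: the only substantive step is the Euler-identity substitution on $\Lambda_t$.
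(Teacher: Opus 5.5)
Your proposal is correct and follows essentially the same route as the paper: reduce $B:B'$ to $\kappa_n\int_{\Lambda_t\cap B_1}x\cdot B(t)x\,dx$ via trace-freeness, then use $\nabla u_t=A(t)x+\nabla R_t=0$ on $\Lambda_t$, dotted with $x$, to write $x\cdot B(t)x=-\frac{|x|^2}{n}-x\cdot\nabla R_t(x)$ and integrate. Your added measure-theoretic remarks (local absolute continuity of $|B|^2$ and finiteness of the integrals) are harmless refinements of the paper's argument.
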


\begin{proof}
By \eqref{eq:center-ode} and the trace-free property of $B(t)$,
\begin{align*}
        \frac12\frac{d}{dt}|B(t)|^2
        &=B(t):B'(t) \\
        &=\kappa_n B(t):\int_{\Lambda_t\cap B_1}x\otimes x\,dx \\
        &=\kappa_n\int_{\Lambda_t\cap B_1}x\cdot B(t)x\,dx.
\end{align*}
Since $u_t=q_{B(t)}+R_t$,
\[
        \nabla u_t(x)=A(t)x+\nabla R_t(x),
        \qquad A(t)=\frac1nI+B(t).
\]
On \(\Lambda_t\) we have \(\nabla u_t=0\) by definition.  Therefore, for $x\in\Lambda_t$,
\[
        A(t)x+\nabla R_t(x)=0.
\]
Taking the scalar product with $x$ gives
\[
        x\cdot B(t)x
        =x\cdot A(t)x-\frac1n|x|^2
        =-x\cdot\nabla R_t(x)-\frac1n|x|^2.
\]
Substitution in the previous identity gives \eqref{eq:lyapunov}.
\end{proof}

Thus uniqueness will follow once $I(t)$ is shown to be absorbable by $F(t)$ up to an integrable error.  The rest of the proof is devoted to this estimate.

\subsection{Estimates on a fixed annulus}

Let
\[
        \mathcal{A}_0:=\{x\in\Rn:1/2<|x|<1\},
        \qquad
        \overline {\mathcal{A}_0}:=\{x\in\Rn:1/2\le |x|\le1\}.
\]
Boundary choices are immaterial in the integral estimates below. Whenever an $L^\infty$ or $C^1$ norm is taken on $\mathcal{A}_0$, it is taken on the compact annulus $\overline {\mathcal{A}_0}$; replacing $\mathcal{A}_0$ by $\overline {\mathcal{A}_0}$ does not change any Lebesgue integral appearing below. Set
\[
        F_0(t):=\int_{\Lambda_t\cap \mathcal{A}_0}|x|^2\,dx,
        \qquad
        I_0(t):=\int_{\Lambda_t\cap \mathcal{A}_0}x\cdot\nabla R_t(x)\,dx,
\]
and
\[
        \eps(t):=\|\nabla R_t\|_{L^\infty(\overline {\mathcal{A}_0})}.
\]

\begin{lemma}\label{lem:outer-small}
\begin{equation}\label{eq:eps-to-zero}
        \eps(t)\to0
        \qquad(t\to\infty),
\end{equation}
and
\begin{equation}\label{eq:F0-to-zero}
        F_0(t)\to0
        \qquad(t\to\infty).
\end{equation}
\end{lemma}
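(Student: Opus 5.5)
We argue by contradiction and compactness, using Proposition~\ref{QC}. Suppose \eqref{eq:eps-to-zero} fails. Then there are $\delta>0$ and $t_j\to\infty$ with $\eps(t_j)\ge\delta$. By Proposition~\ref{QC}, after passing to a subsequence, $u_{t_j}\to q=q_{B_*}\in Q$ in $C^{1,\alpha}_{\rm loc}(\Rn)$. In particular $u_{t_j}\to q$ uniformly on $\partial B_1$, hence in $L^2(\partial B_1)$.

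The first step identifies the limit of the projections. By Lemma~\ref{lem:projection-continuity}, $B(t_j)=\Pi(u_{t_j})\to\Pi(q)$. Since $q-p_0-\psi_{B_*}=0$, uniqueness of the projection gives $\Pi(q)=B_*$. Therefore $\nabla q_{B(t_j)}(x)=A(t_j)x\to A_*x=\nabla q(x)$ uniformly on $\overline{\mathcal A_0}$, where $A_*=\frac1nI+B_*$. Combined with $\nabla u_{t_j}\to\nabla q$ uniformly on $\overline{\mathcal A_0}$, this gives
\[
\nabla R_{t_j}=\nabla u_{t_j}-\nabla q_{B(t_j)}\to0
\]
uniformly on $\overline{\mathcal A_0}$, contradicting $\eps(t_j)\ge\delta$. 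Since every sequence has such a subsequence, $\eps(t)\to0$.

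For \eqref{eq:F0-to-zero} we again take $t_j\to\infty$ and a subsequence with $u_{t_j}\to q=\frac12x\cdot A_*x$, where $\tr A_*=1$. The main point is that the set $\{\nabla q=0\}=\ker A_*$ is a proper linear subspace, because $A_*\ne0$; hence it has Lebesgue measure zero. Fix $\eta>0$ and let $K_\eta:=\{x\in\overline{\mathcal A_0}:|A_*x|\ge\eta\}$. On $K_\eta$, uniform convergence of gradients gives $|\nabla u_{t_j}|\ge\eta/2>0$ for $j$ large, so $\Lambda_{t_j}\cap\mathcal A_0\subset\{x\in\overline{\mathcal A_0}:|A_*x|<\eta\}$. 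Consequently
\[
F_0(t_j)\le\bigl|\{x\in\overline{\mathcal A_0}:|A_*x|<\eta\}\bigr|.
\]
As $\eta\downarrow0$, these sets decrease to $\ker A_*\cap\overline{\mathcal A_0}$, which is a null set. By continuity of measure from above, the right-hand side tends to $0$. Hence $\limsup_j F_0(t_j)=0$, and the subsequence argument gives $F_0(t)\to0$.

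The only delicate point is the identification $\Pi(q)=B_*$, which ensures that the projected quadratic converges to the same blow-up limit. This follows directly from the continuity in Lemma~\ref{lem:projection-continuity} together with uniqueness of the projection. Everything else is routine compactness.
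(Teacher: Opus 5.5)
Your proposal is correct and follows the paper's proof essentially step by step: for $\eps(t)$, a contradiction/subsequence argument via Proposition~\ref{QC} combined with Lemma~\ref{lem:projection-continuity}; for $F_0(t)$, the containment of $\Lambda_{t_j}\cap\mathcal{A}_0$ in a shrinking neighborhood of the null set $\ker A_*$. The only differences are cosmetic. You bound by Lebesgue measure and use continuity from above where the paper uses dominated convergence on $\int_{E_\rho}|x|^2\,dx$, and you spell out the identification $\Pi(q)=B_*$ and the passage to $L^2(\partial B_1)$ convergence, which the paper leaves implicit.
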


\begin{proof}
First we prove \eqref{eq:eps-to-zero}.  Suppose not.  Then there are $t_j\to\infty$ and $\delta>0$ such that
\[
        \|\nabla R_{t_j}\|_{L^\infty(\overline {\mathcal{A}_0})}\ge\delta.
\]
By  Proposition \ref{QC} and compactness, after passing to a subsequence,
\[
        u_{t_j}\to q\in Q
        \qquad\hbox{in }C^1_{\rm loc}(\Rn).
\]
Write $q=q_{B_*}=p_0+\psi_{B_*}$ with $B_*\in S_0$.  Lemma \ref{lem:projection-continuity} gives
\[
        B(t_j)\to B_* \quad\text{in }S_0.
\]
Therefore
\[
        R_{t_j}=u_{t_j}-q_{B(t_j)}\to q_{B_*}-q_{B_*}=0
        \qquad\hbox{in }C^1(\overline {\mathcal{A}_0}),
\]
contradicting the lower bound on $\|\nabla R_{t_j}\|_{L^\infty(\overline {\mathcal{A}_0})}$.

We next prove \eqref{eq:F0-to-zero}.  Suppose not.  Then there are $t_j\to\infty$ and $\delta>0$ such that
\[
        F_0(t_j)\ge\delta.
\]
By Proposition \ref{QC} , passing to a subsequence if necessary,
\[
        u_{t_j}\to q(x)=\frac12 x\cdot A_*x
        \qquad\hbox{in }C^1(\overline {\mathcal{A}_0}),
\]
where $A_*$ is symmetric and $\tr A_*=1$.  In particular $A_*\ne0$, so $\ker A_*$ is a proper linear subspace and has Lebesgue measure zero.

Fix $\rho>0$.  For all large $j$,
\[
        \|\nabla u_{t_j}-A_*x\|_{L^\infty(\overline {\mathcal{A}_0})}\le \rho.
\]
If $x\in\Lambda_{t_j}\cap \mathcal{A}_0$, then $\nabla u_{t_j}(x)=0$, hence $|A_*x|\le\rho$.  Thus
\[
        \Lambda_{t_j}\cap \mathcal{A}_0\subset E_\rho,
        \qquad
        E_\rho:=\{x\in \mathcal{A}_0: |A_*x|\le\rho\}.
\]
Consequently
\[
        \limsup_{j\to\infty}F_0(t_j)
        \le \int_{E_\rho}|x|^2\,dx.
\]
Letting $\rho\downarrow0$ and using dominated convergence gives
\[
        \int_{E_\rho}|x|^2\,dx\to
        \int_{\mathcal{A}_0\cap\ker A_*}|x|^2\,dx=0,
\]
contradicting $F_0(t_j)\ge\delta$.
\end{proof}

\begin{corollary}\label{cor:I0-absorb}
For every $t>0$,
\begin{equation}\label{eq:I0-bound}
        |I_0(t)|\le 2\eps(t)F_0(t).
\end{equation}
In particular, if for every $T>0$ we define the following:
\[
        \eta_T:=\sup_{s\ge T}\eps(s),
        \qquad
        \mu_T:=\sup_{s\ge T}F_0(s),
\]
then, by Lemma \ref{lem:outer-small},
\begin{equation}\label{eq:tail-sup-zero}
        \eta_T\to0,
        \qquad
        \mu_T\to0
        \qquad(T\to\infty).
\end{equation}
\end{corollary}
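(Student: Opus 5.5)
The bound \eqref{eq:I0-bound} is a pointwise estimate on the integrand of $I_0(t)$, and \eqref{eq:tail-sup-zero} is just monotonicity of tail suprema. I will treat the two parts separately.

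For \eqref{eq:I0-bound}, I fix $t>0$ and bound the integrand for $x\in\Lambda_t\cap\mathcal A_0$. By Cauchy--Schwarz and the definition of $\eps(t)$ as the supremum of $|\nabla R_t|$ over $\overline{\mathcal A_0}$,
\[
        |x\cdot\nabla R_t(x)|\le |x|\,|\nabla R_t(x)|\le |x|\,\eps(t).
\]
To pass from $|x|$ to $|x|^2$, I use $|x|>1/2$ on $\mathcal A_0$, so that $|x|<2|x|^2$. Hence the integrand is bounded by $2\eps(t)|x|^2$. Integrating over $\Lambda_t\cap\mathcal A_0$ gives
\[
        |I_0(t)|\le 2\eps(t)\int_{\Lambda_t\cap\mathcal A_0}|x|^2\,dx=2\eps(t)F_0(t).
\]
The integral is well defined: $\Lambda_t$ is measurable, and $\nabla R_t$ is continuous because $u_t\in C^{1,1}$, so no measurability issue arises. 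The factor $2$ comes entirely from the lower radius $1/2$ of the annulus.

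For \eqref{eq:tail-sup-zero}, I argue as follows. Lemma \ref{lem:outer-small} gives $\eps(s)\to0$ and $F_0(s)\to0$ as $s\to\infty$. So for any $\delta>0$ there is $T_\delta$ with $\eps(s)<\delta$ and $F_0(s)<\delta$ for all $s\ge T_\delta$. Then for $T\ge T_\delta$ both $\eta_T$ and $\mu_T$ are suprema over subsets of $[T_\delta,\infty)$, so both are at most $\delta$. This proves $\eta_T,\mu_T\to0$.

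Finiteness of $\eta_T$ and $\mu_T$ for moderate $T$ is not needed for the limit, but it holds: $F_0\le|B_1|$, and $\eps$ is bounded because $u_t$ is uniformly $C^{1,1}$ (Lemma \ref{lem:basic-compactness}) and $B(t)$ is bounded.

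I expect no real obstacle here. The only points to be careful about are that the pointwise bound uses the lower radius $1/2$, and that $\eps$ is a supremum over the closed annulus, consistent with the convention fixed earlier.
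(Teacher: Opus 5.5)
Your proposal is correct and follows the paper's argument: the pointwise bound $|x\cdot\nabla R_t(x)|\le\eps(t)|x|\le 2\eps(t)|x|^2$ on $\mathcal{A}_0$ (using $|x|\ge 1/2$), integrated over $\Lambda_t\cap\mathcal{A}_0$. For the tail suprema, you spell out the standard consequence of Lemma~\ref{lem:outer-small} that the paper simply calls immediate.
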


\begin{proof}
On $\mathcal{A}_0$ one has $1/2\le |x|\le1$, hence $|x|\le2|x|^2$.  Therefore
\[
        |I_0(t)|
        \le \|\nabla R_t\|_{L^\infty(\overline {\mathcal{A}_0})}
        \int_{\Lambda_t\cap \mathcal{A}_0}|x|\,dx
        \le 2\eps(t)F_0(t).
\]
The tail-supremum conclusion follows immediately from \eqref{eq:eps-to-zero} and \eqref{eq:F0-to-zero}.
\end{proof}
\subsection{Dyadic rescaling}

Let
\[
        l:=\log2,
        \qquad
        \mathcal{A}_k:=\{x\in\Rn:2^{-k-1}<|x|<2^{-k}\},
        \qquad k=0,1,2,\ldots.
\]
Define
\[
        F_k(t):=\int_{\Lambda_t\cap \mathcal{A}_k}|x|^2\,dx,
        \qquad
        I_k(t):=\int_{\Lambda_t\cap \mathcal{A}_k}x\cdot\nabla R_t(x)\,dx.
\]
Then, up to the measure-zero set,
\[
        F(t)=\sum_{k=0}^{\infty}F_k(t).
\]
Also
\[
        I(t)=\sum_{k=0}^{\infty}I_k(t),
\]
where the series is absolutely convergent for each fixed $t$, since $\nabla R_t$ is bounded on $B_1$ and $\int_{B_1}|x|\,dx<\infty$.

\begin{lemma}\label{lem:dyadic}
Let $s=t+kl$.  Then
\begin{equation}\label{eq:Fk-scaling}
        F_k(t)=2^{-k(n+2)}F_0(s),
\end{equation}
and
\begin{equation}\label{eq:Ik-scaling}
        I_k(t)=2^{-k(n+2)}\bigl(I_0(s)+J_k(t)\bigr),
\end{equation}
where
\begin{equation}\label{eq:J-def}
        J_k(t):=\int_{\Lambda_s\cap \mathcal{A}_0}z\cdot\bigl(B(s)-B(t)\bigr)z\,dz.
\end{equation}
Moreover,
\begin{equation}\label{eq:J-bound}
        |J_k(t)|\le C_nF_0(s)\int_t^sF(\tau)\,d\tau.
\end{equation}
\end{lemma}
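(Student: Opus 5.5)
The plan is to rescale by $x=2^{-k}z$ and use the scaling relation between $u_t$ and $u_s$ with $s=t+kl$. By definition $u_s(z)=e^{2s}(u(e^{-s}z)-u(0))$. Since $e^{-s}=2^{-k}e^{-t}$, we get
\[
u_s(z)=4^{k}u_t(2^{-k}z),
\qquad
\nabla u_s(z)=2^k\nabla u_t(2^{-k}z).
\]
Hence $\Lambda_s=2^k\Lambda_t$, and the map $x=2^{-k}z$ sends $\Lambda_s\cap\mathcal A_0$ bijectively onto $\Lambda_t\cap\mathcal A_k$. The Jacobian is $dx=2^{-kn}dz$, and $|x|^2=2^{-2k}|z|^2$. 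This gives \eqref{eq:Fk-scaling} directly.

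For \eqref{eq:Ik-scaling}, avoid $R_t$ at the inner scale and use the zero-gradient condition instead, exactly as in the proof of Proposition \ref{prop:lyapunov}. On $\Lambda_t$ we have $\nabla u_t=0$, so $x\cdot\nabla R_t(x)=-x\cdot A(t)x$. Likewise on $\Lambda_s$, $z\cdot\nabla R_s(z)=-z\cdot A(s)z$.

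Change variables in $I_k(t)$:
\[
I_k(t)=-\int_{\Lambda_t\cap\mathcal A_k}x\cdot A(t)x\,dx=-2^{-k(n+2)}\int_{\Lambda_s\cap\mathcal A_0}z\cdot A(t)z\,dz.
\]
Write $A(t)=A(s)-(B(s)-B(t))$, which holds since both $A$'s have the same $\tfrac1nI$ part. The $A(s)$ piece gives $-\int_{\Lambda_s\cap\mathcal A_0} z\cdot A(s)z\,dz=I_0(s)$. The remaining piece gives $+J_k(t)$ as defined in \eqref{eq:J-def}. This proves \eqref{eq:Ik-scaling}.

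For \eqref{eq:J-bound}, Proposition \ref{prop:center-ode} says $B$ is locally absolutely continuous with $|B'|\le C_nF$. Therefore
\[
|B(s)-B(t)|\le\int_t^s|B'(\tau)|\,d\tau\le C_n\int_t^sF(\tau)\,d\tau.
\]
Next, $|z\cdot Mz|\le|M|\,|z|^2$, using the Frobenius norm, which dominates the operator norm. So
\[
|J_k(t)|\le|B(s)-B(t)|\int_{\Lambda_s\cap\mathcal A_0}|z|^2\,dz=|B(s)-B(t)|\,F_0(s),
\]
and combining the two bounds gives the claim.

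The only delicate points are bookkeeping. We need the direction of the dilation, $\Lambda_s=2^k\Lambda_t$, which follows from $\Lambda_t=e^t\Lambda$. We need the measure-zero boundary of the annuli to be irrelevant. And we need the pointwise identity on $\Lambda_t$ to hold everywhere there, not just a.e.; this is true because $\nabla u_t$ is continuous, so it vanishes at every point of $\Lambda_t$. No step is a real obstacle.
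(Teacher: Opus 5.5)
Your proof is correct and has the same structure as the paper's: the dyadic change of variables $x=2^{-k}z$, then $|B(s)-B(t)|\le C_n\int_t^sF$ from Proposition \ref{prop:center-ode}. The one difference is in the $I_k$ identity. You use $\nabla u_t=0$ on $\Lambda_t$ to rewrite the integrand as $-x\cdot A(t)x$ and split $A(t)=A(s)-(B(s)-B(t))$. The paper instead uses the homogeneity of $q_{B(t)}$ to derive the pointwise law $\nabla R_t(2^{-k}z)=2^{-k}\bigl(\nabla R_s(z)+(B(s)-B(t))z\bigr)$, which holds on the whole annulus and not only on the inactive set; both yield the same formula.
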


\begin{proof}
Let $x=2^{-k}z$.  Then $x\in \mathcal{A}_k$ if and only if $z\in \mathcal{A}_0$, and $e^{-t}x=e^{-s}z$.  Hence
\[
        u_t(2^{-k}z)=2^{-2k}u_s(z),
        \qquad
        \nabla u_t(2^{-k}z)=2^{-k}\nabla u_s(z).
\]
Thus $\Lambda_t\cap \mathcal{A}_k$ is mapped onto $\Lambda_s\cap \mathcal{A}_0$, and
\[
        F_k(t)=\int_{\Lambda_s\cap \mathcal{A}_0}|2^{-k}z|^2 2^{-kn}\,dz
        =2^{-k(n+2)}F_0(s),
\]
which proves \eqref{eq:Fk-scaling}.

Since $q_{B(t)}$ is homogeneous of degree two,
\begin{align*}
        R_t(2^{-k}z)
        &=u_t(2^{-k}z)-q_{B(t)}(2^{-k}z) \\
        &=2^{-2k}\bigl(u_s(z)-q_{B(t)}(z)\bigr) \\
        &=2^{-2k}\bigl(R_s(z)+q_{B(s)}(z)-q_{B(t)}(z)\bigr).
\end{align*}
Taking the gradient with respect to $x$ gives
\[
        \nabla R_t(2^{-k}z)
        =2^{-k}\bigl(\nabla R_s(z)+(B(s)-B(t))z\bigr).
\]
Therefore
\begin{align*}
        I_k(t)
        &=2^{-k(n+2)}\int_{\Lambda_s\cap \mathcal{A}_0}
        z\cdot\bigl(\nabla R_s(z)+(B(s)-B(t))z\bigr)\,dz \\
        &=2^{-k(n+2)}\bigl(I_0(s)+J_k(t)\bigr),
\end{align*}
which proves \eqref{eq:Ik-scaling}.

Finally, by \eqref{eq:Bprime-bound},
\[
        |B(s)-B(t)|\le \int_t^s|B'(\tau)|\,d\tau
        \le C_n\int_t^sF(\tau)\,d\tau.
\]
Using $z\in \mathcal{A}_0$ gives
\[
        |J_k(t)|
        \le |B(s)-B(t)|\int_{\Lambda_s\cap \mathcal{A}_0}|z|^2\,dz
        \le C_nF_0(s)\int_t^sF(\tau)\,d\tau.
\]
\end{proof}

\begin{proposition}\label{prop:I-global}
Fix \(T>0\). Then for every $t\ge T$,
\begin{equation}\label{eq:I-global}
        |I(t)|\le 2\eta_TF(t)+C_nV(t),
\end{equation}
where
\begin{equation}\label{eq:V-def}
        V(t):=\sum_{k=0}^{\infty}2^{-k(n+2)}F_0(t+kl)
        \int_t^{t+kl}F(\tau)\,d\tau.
\end{equation}
\end{proposition}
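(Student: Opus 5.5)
The plan is to sum the dyadic identity \eqref{eq:Ik-scaling} of Lemma~\ref{lem:dyadic} over $k$ and estimate each piece separately. Since $I(t)=\sum_{k\ge0}I_k(t)$ converges absolutely, we have
\[
        |I(t)|\le\sum_{k=0}^\infty 2^{-k(n+2)}|I_0(t+kl)|
        +\sum_{k=0}^\infty 2^{-k(n+2)}|J_k(t)|.
\]
The first sum should reconstruct $F(t)$ up to the factor $2\eta_T$. The second sum should be exactly $V(t)$ up to a dimensional constant.

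For the first sum, fix $t\ge T$ and put $s=t+kl\ge T$. Corollary~\ref{cor:I0-absorb} gives
\[
        |I_0(s)|\le 2\eps(s)F_0(s)\le 2\eta_T F_0(s),
\]
because $\eps(s)\le\sup_{\sigma\ge T}\eps(\sigma)=\eta_T$. Multiplying by $2^{-k(n+2)}$ and using \eqref{eq:Fk-scaling} in reverse, $2^{-k(n+2)}F_0(t+kl)=F_k(t)$, we get
\[
        \sum_k 2^{-k(n+2)}|I_0(t+kl)|\le 2\eta_T\sum_k F_k(t)=2\eta_T F(t),
\]
using $F(t)=\sum_kF_k(t)$ from the dyadic decomposition.

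For the second sum, we apply \eqref{eq:J-bound} termwise:
\[
        |J_k(t)|\le C_nF_0(t+kl)\int_t^{t+kl}F(\tau)\,d\tau.
\]
Summing with the weights $2^{-k(n+2)}$ gives exactly $C_nV(t)$ as defined in \eqref{eq:V-def}. The constant $C_n$ comes from \eqref{eq:Bprime-bound} and is independent of $k$, so it factors out of the sum.

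The only point needing care is to justify splitting the absolutely convergent series into two sums. There are two cases.

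\emph{Case $V(t)<\infty$.} Both series of nonnegative terms converge, so the triangle inequality applied to the partial sums passes to the limit.

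\emph{Case $V(t)=\infty$.} The inequality \eqref{eq:I-global} holds trivially.

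The bound $\int_t^{t+kl}F\le C_n kl$ shows $V$ is in fact finite, since $F\le|B_1|$, although this is not needed. I expect no genuine obstacle: the estimate is bookkeeping once Lemma~\ref{lem:dyadic} and Corollary~\ref{cor:I0-absorb} are in hand. The only subtlety is that $\eta_T$ controls $\eps(s)$ for all the later times $s=t+kl$, which holds precisely because we require $t\ge T$.
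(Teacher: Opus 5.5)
Your proposal is correct and follows the paper's proof essentially verbatim. Like the paper, you bound each $|I_k(t)|$ using Lemma~\ref{lem:dyadic}, Corollary~\ref{cor:I0-absorb} with $\eps(t+kl)\le\eta_T$ (valid because $t+kl\ge T$), and \eqref{eq:J-bound}, and then resum using $2^{-k(n+2)}F_0(t+kl)=F_k(t)$ and $\sum_kF_k(t)=F(t)$. Your justification for splitting the series is harmless bookkeeping that the paper leaves implicit.
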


\begin{proof}
Let $s=t+kl$.  By Lemma \ref{lem:dyadic}, Corollary \ref{cor:I0-absorb}, and \eqref{eq:J-bound},
\begin{align*}
        |I_k(t)|
        &\le 2^{-k(n+2)}\left(2\eps(s)F_0(s)
        +C_nF_0(s)\int_t^sF(\tau)\,d\tau\right) \\
        &\le 2\eta_TF_k(t)
        +C_n2^{-k(n+2)}F_0(t+kl)\int_t^{t+kl}F(\tau)\,d\tau,
\end{align*}
where we used \eqref{eq:Fk-scaling}.  Summing over $k$ proves \eqref{eq:I-global}.
\end{proof}

\subsection{An absorption estimate}

We now estimate the term \(V(t)\) in \eqref{eq:V-def}.  After integration
in \(t\), the factor \(F_0(t+kl)\) is bounded by its supremum on the tail
\([T,\infty)\).  This supremum tends to zero as \(T\to\infty\), which gives
the small factor needed for the absorption argument.

\begin{lemma}\label{lem:volterra}
There exists a constant $C_n<\infty$, depending only on $n$, such that for every $T>0$ and every $S>T$,
\begin{equation}\label{eq:volterra}
        \int_T^SV(t)\,dt
        \le C_n\mu_T\left(\int_T^SF(t)\,dt+1\right).
\end{equation}
\end{lemma}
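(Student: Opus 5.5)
The plan is to bound $V$ termwise, interchange sum and integral (everything is nonnegative, so Tonelli applies), and then use Fubini on each term. The only issue is the part of the inner integral of $F$ that extends past $S$. That part will be controlled by the trivial pointwise bound on $F$, and it is what produces the ``$+1$'' in \eqref{eq:volterra}.

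First, for $t\ge T$ and $k\ge0$ we have $t+kl\ge T$. By the definition of $\mu_T$ in Corollary \ref{cor:I0-absorb}, this gives $F_0(t+kl)\le \mu_T$. Hence, for $t\ge T$,
\[
        V(t)\le \mu_T\sum_{k=0}^\infty 2^{-k(n+2)}\int_t^{t+kl}F(\tau)\,d\tau .
\]
Since all terms are nonnegative and measurable in $t$ (note that $F$ is measurable in $t$, being built from $\nabla u$ continuous in $(t,x)$), Tonelli's theorem lets us integrate term by term over $[T,S]$.

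Next, for fixed $k$, we swap the order of integration:
\[
        \int_T^S\int_t^{t+kl}F(\tau)\,d\tau\,dt
        =\int_T^{S+kl}F(\tau)\,\bigl|\{t\in[T,S]:\tau-kl\le t\le \tau\}\bigr|\,d\tau
        \le kl\int_T^{S+kl}F(\tau)\,d\tau .
\]
We then split the last integral at $S$. For the piece on $[S,S+kl]$ we use the trivial bound
\[
        F(\tau)\le \int_{B_1}|x|^2\,dx=:M_n,
\]
which depends only on $n$. This gives
\[
        \int_T^{S+kl}F\le \int_T^SF+M_n kl .
\]

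Finally, we sum over $k$:
\[
        \int_T^SV\,dt\le \mu_T\sum_{k\ge0}2^{-k(n+2)}\Bigl(kl\int_T^SF+M_n k^2l^2\Bigr)
        \le C_n\mu_T\Bigl(\int_T^SF\,dt+1\Bigr),
\]
with $C_n:=\sum_k 2^{-k(n+2)}\max(kl,M_nk^2l^2)<\infty$. I do not expect a real obstacle. The one point needing care is that the window $[t,t+kl]$ reaches beyond $S$, and this is exactly why the additive constant is needed rather than a pure multiple of $\int_T^S F$. The geometric weight $2^{-k(n+2)}$ absorbs the polynomial growth in $k$ coming from both the window length $kl$ and the tail estimate.
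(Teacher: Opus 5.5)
Your proposal is correct and follows the paper's proof essentially step for step. You bound $F_0(t+kl)\le\mu_T$, apply Tonelli, use Fubini to get a window of length at most $kl$ for $\tau\in[T,S]$, use $F\le M_n$ on $[S,S+kl]$ to produce the $M_n(kl)^2$ term, and sum against the geometric weight $2^{-k(n+2)}$.
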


\begin{proof}
Since $F_0\le\mu_T$ on $[T,\infty)$, $V(t)<\infty $ and all summands in \(V(t)\) are nonnegative, Tonelli's theorem gives
\[
        \int_T^SV(t)\,dt
        \le \mu_T\sum_{k=0}^{\infty}2^{-k(n+2)}
        \int_T^S\int_t^{t+kl}F(\tau)\,d\tau\,dt.
\]
Also
\[
        0\le F(t)\le \int_{B_1}|x|^2\,dx=:M_n.
\]
Fix $k$.  In the region $T\le t\le S$ and $t\le \tau\le t+kl$, the section in the $t$ variable has length at most $kl$ for $\tau\in[T,S]$.  The remaining part lies in $\tau\in[S,S+kl]$ and is bounded by $M_n(kl)^2$.  Hence
\[
        \int_T^S\int_t^{t+kl}F(\tau)\,d\tau\,dt
        \le kl\int_T^SF(\tau)\,d\tau+M_n(kl)^2.
\]
Therefore
\[
        \int_T^SV(t)\,dt
        \le \mu_T\left(\sum_{k=0}^{\infty}2^{-k(n+2)}kl\right)
        \int_T^SF(\tau)\,d\tau
        +M_n\mu_T\sum_{k=0}^{\infty}2^{-k(n+2)}(kl)^2.
\]
Both series converge and depend only on $n$.  This proves \eqref{eq:volterra}.
\end{proof}

\subsection{Convergence of the quadratic coefficient}

We now combine the Lyapunov identity with the estimate of the error term and
the preceding absorption estimate.

\begin{proposition}\label{prop:finite-dissipation}
There exists $T<\infty$ such that
\begin{equation}\label{eq:F-L1}
        \int_T^\infty F(t)\,dt<\infty.
\end{equation}
Consequently,
\begin{equation}\label{eq:B-total-variation}
        \int_T^\infty |B'(t)|\,dt<\infty,
\end{equation}
and therefore there exists $B_\infty\in S_0$ such that
\begin{equation}\label{eq:B-limit}
        B(t)\to B_\infty
        \qquad(t\to\infty).
\end{equation}
\end{proposition}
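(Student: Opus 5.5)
Integrate the Lyapunov identity \eqref{eq:lyapunov} over $[T,S]$ and absorb the error term $I$ using Proposition~\ref{prop:I-global} and Lemma~\ref{lem:volterra}. This yields a bound on $\int_T^S F$ that is uniform in $S$. Total variation and convergence of $B$ then follow from \eqref{eq:Bprime-bound}.

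\emph{Step 1: integrate the Lyapunov identity.} Fix $T$ large, to be chosen, and let $S>T$. Since $B$ is locally absolutely continuous, so is $|B|^2$, and integrating \eqref{eq:lyapunov} gives
\[
\frac{\kappa_n}{n}\int_T^S F(t)\,dt
=\frac12|B(T)|^2-\frac12|B(S)|^2-\kappa_n\int_T^S I(t)\,dt .
\]
The left side is nonnegative, and $-\tfrac12|B(S)|^2\le 0$. We need a bound on $|B(T)|$. By Lemma~\ref{lem:basic-compactness}, $u_t$ is bounded in $C^{1,1}(B_1)$ uniformly for $t\ge T_1$, so $\|u_t-p_0\|_{L^2(\partial B_1)}$ is bounded. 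By Lemma~\ref{lem:projection-continuity}, $B(t)=G^{-1}m(u_t)$ is then uniformly bounded, say $|B(t)|\le M$ for $t\ge T_1$. Hence
\[
\frac{\kappa_n}{n}\int_T^S F\le \frac{M^2}{2}+\kappa_n\int_T^S|I|.
\]

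\emph{Step 2: absorb $\int|I|$.} By Proposition~\ref{prop:I-global} and then Lemma~\ref{lem:volterra},
\[
\int_T^S|I|
\le 2\eta_T\int_T^SF+C_n\int_T^S V
\le \bigl(2\eta_T+C_n\mu_T\bigr)\int_T^SF+C_n\mu_T .
\]
By \eqref{eq:tail-sup-zero}, we may choose $T\ge T_1$ so large that
\[
\kappa_n\bigl(2\eta_T+C_n\mu_T\bigr)\le \frac{\kappa_n}{2n}.
\]
Since $F\le M_n$, the integral $\int_T^SF$ is finite for each $S$, so moving this term to the left side is legitimate. This gives
\[
\frac{\kappa_n}{2n}\int_T^SF\le \frac{M^2}{2}+C_n\kappa_n\mu_T,
\]
and the right side is independent of $S$. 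Letting $S\to\infty$ with monotone convergence proves \eqref{eq:F-L1}.

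\emph{Step 3: total variation and convergence of $B$.} From \eqref{eq:Bprime-bound}, $\int_T^\infty|B'|\le C_n\int_T^\infty F<\infty$, which is \eqref{eq:B-total-variation}. For $T\le s<t$,
\[
|B(t)-B(s)|\le\int_s^t|B'|\to0 \quad\text{as } s\to\infty .
\]
So $B(t)$ is Cauchy in the finite-dimensional closed space $S_0$ and converges to some $B_\infty\in S_0$, proving \eqref{eq:B-limit}.

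\emph{Main obstacle.} The only delicate point is the absorption in Step 2. It needs the smallness constants $\eta_T,\mu_T$ to be uniform over the whole tail $[T,\infty)$, which is what \eqref{eq:tail-sup-zero} provides. It also needs $\int_T^SF$ to be finite before it is absorbed, which holds because $F$ is bounded on each finite interval. The additive constant $C_n\mu_T$ from Lemma~\ref{lem:volterra} is harmless because it does not depend on $S$.
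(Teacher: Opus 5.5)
Your proof is correct and follows the paper's argument: integrate the Lyapunov identity over $[T,S]$, control $\int_T^S|I|$ with Proposition~\ref{prop:I-global} and Lemma~\ref{lem:volterra}, absorb using the tail smallness of $\eta_T$ and $\mu_T$, and then conclude with \eqref{eq:Bprime-bound}. The differences are cosmetic: you impose the two smallness conditions on $T$ at once and bound $|B(T)|$ uniformly via compactness, whereas the paper only uses that $|B(T)|$ is finite for the fixed chosen $T$.
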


\begin{proof}
By Proposition \ref{prop:lyapunov} and Proposition \ref{prop:I-global}, for a.e. $t\ge T$,
\begin{align*}
        \frac12\frac{d}{dt}|B(t)|^2
        &\le -\frac{\kappa_n}{n}F(t)+\kappa_n|I(t)| \\
        &\le -\left(\frac{\kappa_n}{n}-2\kappa_n\eta_T\right)F(t)+C_nV(t).
\end{align*}
Choose $T$ so large that
\[
        \eta_T\le\frac{1}{4n}.
\]
Then
\begin{equation}\label{eq:lyap-ineq}
        \frac12\frac{d}{dt}|B(t)|^2
        \le -\frac{\kappa_n}{2n}F(t)+C_nV(t).
\end{equation}
Integrating \eqref{eq:lyap-ineq} over $[T,S]$ gives
\[
        \frac{\kappa_n}{2n}\int_T^SF(t)\,dt
        \le \frac12|B(T)|^2-\frac12|B(S)|^2+C_n\int_T^SV(t)\,dt.
\]
Dropping the nonpositive term $-\frac12|B(S)|^2$ and using Lemma \ref{lem:volterra},
\[
        \frac{\kappa_n}{2n}\int_T^SF(t)\,dt
        \le \frac12|B(T)|^2
        +C_n\mu_T\left(\int_T^SF(t)\,dt+1\right).
\]
Increasing $T$ if necessary, using $\mu_T\to0$, we may arrange
\[
        C_n\mu_T\le \frac{\kappa_n}{4n}.
\]
Thus
\[
        \int_T^SF(t)\,dt\le C(T,n,u)
        \qquad\forall S>T.
\]
Letting $S\to\infty$ proves \eqref{eq:F-L1}.  The bound \eqref{eq:B-total-variation} follows from \eqref{eq:Bprime-bound}.  Since $B$ is absolutely continuous on compact intervals and has finite total variation on $[T,\infty)$, it has a finite limit $B_\infty\in S_0$ as $t\to\infty$.
\end{proof}

\subsection{Proof of the main theorem}

We now prove Theorem \ref{thm:main}.  Let $t_j\to\infty$ be arbitrary.  By Lemma \ref{lem:basic-compactness} and Proposition \ref{QC} , after passing to a subsequence,
\[
        u_{t_j}\to q\in Q
        \qquad\hbox{in }C^{1,\alpha}_{\rm loc}(\Rn)
\]
for every $0<\alpha<1$.  Write
\[
        q=q_{B_*}=p_0+\psi_{B_*},
        \qquad B_*\in S_0.
\]
By the continuity of the projection, Lemma \ref{lem:projection-continuity},
\[
        B(t_j)\to B_*.
\]
On the other hand, Proposition \ref{prop:finite-dissipation} gives $B(t)\to B_\infty$. Hence $B_*=B_\infty$. Therefore every subsequential blow-up is equal to
\[
        q_\infty(x):=q_{B_\infty}(x)
        =\frac{|x|^2}{2n}+\frac12 x\cdot B_\infty x.
\]

It remains only to pass from uniqueness of cluster points to convergence of the full family. Fix $R<\infty$ and $0<\alpha<1$. If $u_t$ did not converge to $q_\infty$ in $C^{1,\alpha}(B_R)$, then there would be $\eps_0>0$ and $t_j\to\infty$ such that
\[
        \|u_{t_j}-q_\infty\|_{C^{1,\alpha}(B_R)}\ge\eps_0.
\]
By Lemma \ref{lem:basic-compactness}, a subsequence would converge in $C^{1,\alpha}(B_R)$ to some limit $v$. Applying Proposition \ref{QC}  to this same sequence and then taking a further subsequence if necessary, there exists \(q\in Q\) such that
\[
u_{t_j}\to q
\quad\text{in }C^{1,\alpha}_{\rm loc}(\mathbb R^n).
\]
Since the same further subsequence also converges to \(v\) in
\(C^{1,\alpha}(\overline{B_R})\), uniqueness of limits gives
\[
v=q|_{\overline{B_R}}.
\]
By the preceding paragraph, \(q=q_\infty\). Then $v=q_{\infty}|_{\overline{B_R}}$, contradicting the lower bound.  Hence
\[
        u_t\to q_\infty
        \qquad\hbox{in }C^{1,\alpha}_{\rm loc}(\Rn).
\]
The equivalent $r$-formulation follows from $r=e^{-t}$.

Finally, let $x_j\to0$ with $x_j\ne0$, set $r_j:=|x_j|$ and $\theta_j:=x_j/r_j\in\partial B_1$.  Since $q_\infty$ is homogeneous of degree two,
\[
        \frac{u(x_j)-u(0)-q_\infty(x_j)}{|x_j|^2}
        =u_{r_j}(\theta_j)-q_\infty(\theta_j).
\]
The right-hand side is bounded by
\[
        \|u_{r_j}-q_\infty\|_{L^\infty(\partial B_1)}\to0.
\]
Thus
\[
        u(x)-u(0)=q_\infty(x)+o(|x|^2),
\]
and Theorem \ref{thm:main} is proved.

\end{document}